\newtheorem{prop}{Proposition}[section]
\newtheorem{thm}[prop]{Theorem}
\newtheorem{cor}[prop]{Corollary}
\newtheorem{lem}[prop]{Lemma}
\theoremstyle{definition}
\newtheorem{defn}[prop]{Definition}
\newtheorem{expl}[prop]{Example}
\newtheorem{rem}[prop]{\it Remark}
\newtheorem*{claim*}{Claim}
\newcommand{\bC}{\mathbb{C}}
\newcommand{\bR}{\mathbb{R}}
\newcommand{\bA}{\mathbb{A}}
\newcommand{\bQ}{\mathbb{Q}}
\newcommand{\bZ}{\mathbb{Z}}
\newcommand{\bN}{\mathbb{N}}
\newcommand{\bG}{\mathbb{G}}
\newcommand{\bT}{\mathbb{T}}
\newcommand{\tX}{\widetilde{X}}
\newcommand{\tF}{\widetilde{F}}
\newcommand{\tG}{\widetilde{\Gamma}}
\newcommand{\cX}{\mathcal{X}}
\newcommand{\cD}{\mathcal{D}}
\newcommand{\cO}{\mathcal{O}}
\newcommand{\cL}{\mathcal{L}}
\newcommand{\cI}{\mathcal{I}}
\newcommand{\cF}{\mathcal{F}}
\newcommand{\cG}{\mathcal{G}}
\newcommand{\fm}{\mathfrak{m}}
\newcommand{\fX}{\mathfrak{X}}
\newcommand{\fD}{\mathfrak{D}}
\newcommand{\Supp}{\mathrm{Supp}}
\newcommand{\lct}{\mathrm{lct}}
\newcommand{\Pic}{\mathrm{Pic}}
\newcommand{\vol}{\mathrm{vol}}
\newcommand{\ord}{\mathrm{ord}}
\newcommand{\Gr}{\mathrm{Gr}}
\newcommand{\Bs}{\mathrm{Bs}}
\newcommand{\wt}{\mathrm{wt}}
\newcommand{\Val}{\mathrm{Val}}
\newcommand{\Aut}{\mathrm{Aut}}
\newcommand{\Fut}{\mathrm{Fut}}
\numberwithin{equation}{section}
\begin{document}

\title{Product theorem for K-stability}

\author{Ziquan Zhuang}
\address{Department of Mathematics, MIT, Cambridge, MA, 02139.}
\email{ziquan@mit.edu}

\date{}

\maketitle

\begin{abstract}
    We prove a product formula for delta invariant and as an application, we show that product of K-(semi, poly)stable Fano varieties is also K-(semi, poly)stable.
\end{abstract}

\section{Introduction}

K-(poly)stability of complex Fano varieties was first introduced by Tian \cite{Tian-K-stability-defn} and later reformulated in a more algebraic way by Donaldson \cite{Don-K-stability-defn}. By the generalized Yau-Tian-Donaldson (YTD) conjecture, K-polystability of (singular) Fano varieties are expected to give algebraic characterization of the existence of (singular) K\"ahler-Einstein metric. This has been known in the smooth case \cites{Tian-K-stability-defn,Berman-polystable,CDS,Tian} and the uniformly K-stable case \cite{LTW-uniform-YTD}.

From this metric point of view, it is easy to see (or at least expect) that products of K-(semi, poly)stable Fano varieties are also K-(semi, poly)stable. Results of this type actually play an important role towards the proof of the quasi-projectivity of the K-moduli \cite{CP-cm-positivity}. However, no algebraic proof is known for this intuitively simple fact.

The purpose of this note is to give such a proof. Our main result goes as follows.

\begin{thm} \label{main:product}
Let $X_i$ $(i=1,2)$ be $\bQ$-Fano varieties and let $X=X_1\times X_2$. Then $X$ is K-semistable $($resp. K-polystable, K-stable, uniformly K-stable$)$ if and only if $X_i$ $(i=1,2)$ are both K-semistable $($resp. K-polystable, K-stable, uniformly K-stable$)$.
\end{thm}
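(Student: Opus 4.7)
The strategy is to reduce all four stability statements to a single product formula for the stability threshold:
\[ \delta(X_1\times X_2)=\min\bigl(\delta(X_1),\delta(X_2)\bigr). \]
Once this is established, K-semistability ($\delta\ge 1$) and uniform K-stability ($\delta>1$) for $X$ and for the factors are equivalent by the Fujita--Li / Blum--Jonsson characterization of these properties in terms of $\delta$. For K-stability and K-polystability, which are not captured by $\delta$ alone, I would combine the product formula with an equivariant refinement and a rigidity analysis of the valuations that compute $\delta$.

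The inequality $\delta(X_1\times X_2)\le\min_i\delta(X_i)$ is the easy direction: for a divisorial valuation $\ord_E$ on $X_i$, its pullback along the projection $p_i:X\to X_i$ satisfies $A_X(p_i^{-1}(E))=A_{X_i}(E)$, and, using the K\"unneth splitting $H^0(-mK_X)=H^0(-mK_{X_1})\otimes H^0(-mK_{X_2})$ together with tensor-product bases, one computes $S_{-K_X}(p_i^{*}\ord_E)=S_{-K_{X_i}}(\ord_E)$. For the reverse inequality, the main technical point, I would take an arbitrary (quasi-monomial) valuation $v$ on $X$ and analyze $S_{-K_X}(v)$ through $m$-basis type divisors built from tensor-product bases $\{s_a\otimes t_b\}$ of $H^0(-mK_X)$. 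Ordering the pairs $(a,b)$ by the values $v(s_a\otimes t_b)$ and passing to the limit on the product Okounkov body $\Delta_{X_1}\times\Delta_{X_2}$ via a concave-transform / rearrangement argument should bound $S_{-K_X}(v)$ above by a convex combination of $S$-invariants of filtrations induced by $v$ on the two factors; combined with the corresponding lower bound on $A_X(v)$ in terms of log discrepancies of the induced components, this yields $A_X(v)/S_{-K_X}(v)\ge\min(\delta(X_1),\delta(X_2))$.

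For K-stability and K-polystability I would apply the same analysis to the equivariant threshold $\delta_\bT$ with respect to the natural action of a maximal torus $\bT\subset\Aut(X_1)\times\Aut(X_2)\subset\Aut(X)$, and invoke the recent characterization of K-(poly)stability as (reduced) strict positivity of $\delta_\bT$ together with rigidity of its minimizers. The main obstacle I anticipate is the key estimate in the previous paragraph: bounding $S_{-K_X}(v)$ for a general, non-product valuation $v$ requires extracting from $v$ two ``factor'' filtrations that together control it, and the concave-transform step must be carried out carefully enough that no loss occurs in the relevant limit. The K-polystability step adds the further requirement that a valuation achieving equality in the product formula is essentially a pullback from one of the factors; this rigidity is what will let one transfer the ``product-degeneration'' structure of test configurations with $\Fut=0$ from each $X_i$ back to $X_1\times X_2$.
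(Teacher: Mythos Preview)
Your high-level strategy matches the paper's: reduce K-semistability and uniform K-stability to the product formula $\delta(X_1\times X_2)=\min_i\delta(X_i)$, and handle K-stability and K-polystability by tracking which valuations achieve equality, using an equivariant reduction via a maximal torus $\bT\subset\Aut(X_1)\times\Aut(X_2)$. The easy inequality and the K\"unneth computation of $S$ for pullback valuations are exactly as in the paper.

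The key divergence is in the hard inequality $\delta(X)\ge\min_i\delta(X_i)$. You propose to control $S_{-K_X}(v)$ for a general valuation $v$ by ordering tensor-product basis elements, passing to the product Okounkov body, and running a concave-transform/rearrangement argument to extract ``factor filtrations''. The paper does \emph{not} do this, and the obstacle you flag is real: there is no obvious way to associate to an arbitrary $v$ a pair of filtrations on the factors whose $S$-invariants dominate $S(v)$ with matching log discrepancies. Instead the paper works pointwise with a fixed divisor $E$ over $X$ and proves directly that $(X,c\Gamma_m)$ is klt along $E$ for $c<\min_i\delta(X_i)$ and $m\gg 0$, where $\Gamma_m$ is an $m$-basis type divisor compatible with $\cF_E$. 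The mechanism is inversion of adjunction: one introduces an auxiliary \emph{basis type filtration} $\cG$ on $H^0(-mK_{X_2})$ (built from general points of $X_2$, or from general points on the divisor induced by $E$ when the center of $E$ fails to dominate $X_2$), uses it to partition the basis into $N_{m,2}$ blocks each of which restricts to a genuine basis of $H^0(-mK_{X_1})$ on a general fiber $X_1\times x$, and then applies the klt condition on the fiber plus inversion of adjunction block by block, followed by convexity. No Okounkov body or concave transform enters.

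For K-stability and K-polystability your outline is correct in spirit but misses the concrete rigidity statement the paper extracts from the above argument: if $E$ computes $\delta(X)$, then either its restriction to a general fiber $X_1\times x$ computes $\delta(X_1)$ (dominant case), or the divisorial valuation it induces on $X_2$ via projection computes $\delta(X_2)$ (non-dominant case). This explicit dichotomy, combined with the $\bT$-equivariant criterion (the paper's Theorem~\ref{thm:T-polystable}) and a twist by $\wt_{-\xi_2}$ to reduce the non-dominant case to the dominant one, is what closes the K-polystable argument. Your proposed route via ``reduced strict positivity of $\delta_\bT$'' would need the same rigidity input, which in turn comes from the inversion-of-adjunction proof rather than from an Okounkov-body estimate.
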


Indeed, our result works for products of log Fano pairs as well (see Corollary \ref{cor:product thm} and Proposition \ref{prop:polystable product}).

One of the main tools that goes into the proof is the $\delta$-invariant (or adjoint stability threshold) of a big line bundle (see Section \ref{sec:prelim-delta}). This invariant was introduced and studied by \cites{FO-delta,BJ-delta}, and one of their main results is that a $\bQ$-Fano variety $X$ is K-semistable (resp. uniformly K-stable) if and only if $\delta(-K_X)\ge 1$ (resp. $\delta(-K_X)>1$). This allows us to reduce most parts of Theorem \ref{main:product} to proving a product formula for $\delta$-invariant (c.f. \cite{PW-dP-delta}*{Conjecture 1.10}, \cite{CP-cm-positivity}*{Conjecture 4.9}):

\begin{thm}[=Theorem \ref{thm:delta product}] \label{main:delta product}
Let $(X_i,\Delta_i)$ be projective klt pairs and let $L_i$ be big line bundles on $X_i$ $(i=1,2)$. Let $X=X_1\times X_2$, $L=L_1\boxtimes L_2$ and $\Delta=\Delta_1\boxtimes \Delta_2$. Then
    \begin{enumerate}
        \item $\delta(X,\Delta;L)=\min\{ \delta(X_1,\Delta_1;L_1),\delta(X_2,\Delta_2;L_2)\}$.
        \item If there exists a divisor $E$ over $X$ which computes $\delta(X,\Delta;L)$, then for some $i\in\{1,2\}$, there also exists a divisor $E_i$ over $X_i$ that computes $\delta(X_i,\Delta_i;L_i)$.
    \end{enumerate}
\end{thm}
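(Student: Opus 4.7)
\emph{Easy direction.} The inequality $\delta(X,\Delta;L)\le\min\{\delta(X_1,\Delta_1;L_1),\delta(X_2,\Delta_2;L_2)\}$ follows from a pullback argument. For any divisor $E_1$ over $X_1$, its pullback $p_1^*E_1$ along the first projection $p_1\colon X\to X_1$ is a divisor over $X$: a direct log-resolution computation (using $Y_1\times X_2\to X$, with $Y_1\to X_1$ a log resolution realizing $E_1$) gives $A_{X,\Delta}(p_1^*E_1)=A_{X_1,\Delta_1}(E_1)$, while K\"unneth's formula $h^0(mL-kp_1^*E_1)=h^0(mL_1-kE_1)\cdot h^0(mL_2)$ yields $S_L(p_1^*E_1)=S_{L_1}(E_1)$. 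Taking the infimum over $E_1$ gives $\delta(X;L)\le\delta(X_1;L_1)$, and symmetrically for $X_2$. This also proves the easy half of part~(2): if $E_i$ computes $\delta(X_i,\Delta_i;L_i)=\min(\delta_1,\delta_2)$, then $p_i^*E_i$ computes $\delta(X,\Delta;L)$.

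\emph{Hard direction.} For the reverse inequality I would use the valuative characterization $\delta(X,\Delta;L)=\inf_v A_{X,\Delta}(v)/S_L(v)$ over divisorial valuations. Given such a $v$ on $X$, let $v_i:=v|_{\bC(X_i)}$ be its restriction to each factor's function field (with the convention that trivial restrictions contribute $A=S=0$). Two basic comparisons are the starting point: (a) $A_{X,\Delta}(v)\ge A_{X_1,\Delta_1}(v_1)+A_{X_2,\Delta_2}(v_2)$, provable on a common log resolution of $(X_1,\Delta_1)\times(X_2,\Delta_2)$ on which $v$, $v_1\boxtimes 1$, and $1\boxtimes v_2$ are simultaneously quasi-monomial; and (b) $S_L(v)\ge S_{L_1}(v_1)+S_{L_2}(v_2)$, obtained from $v(e_i\otimes f_j)\ge v_1(e_i)+v_2(f_j)$ on pure tensors (where $\{e_i\}$ and $\{f_j\}$ are $v_i$-compatible bases of $H^0(mL_i)$) combined with $mh^0(mL)S_m(v)=\int_0^\infty\dim\cF^t_v H^0(mL)\,dt$. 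When $v$ is itself a \emph{product} (quasi-monomial) valuation, both (a) and (b) are equalities, and the weighted-mean inequality $(A_1+A_2)/(S_1+S_2)\ge\min(A_1/S_1,A_2/S_2)$ together with $A_i(v_i)/S_i(v_i)\ge\delta_i$ immediately yields $A(v)/S(v)\ge\min(\delta_1,\delta_2)$.

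\emph{Main obstacle and part~(2).} The main difficulty lies with general $v$, for which (a) and (b) can both be strict simultaneously (e.g.\ $v=\ord_E$ where $E$ has center on $X$ surjecting onto both factors, as for the diagonal when $X_1\cong X_2$); the naive combination of (a) and (b) then does not control $A/S$. I would try to bridge this gap by establishing a \emph{coupling inequality} $A(v)-A_1(v_1)-A_2(v_2)\ge\min(\delta_1,\delta_2)\cdot(S_L(v)-S_{L_1}(v_1)-S_{L_2}(v_2))$, which combined with the product-valuation case yields the full estimate via a further weighted mean. A cleaner alternative would be an Abban--Zhuang-type iterated adjunction along a flag $X_1\times\{p_2\}\subset X_1\times D_2^{(1)}\subset\cdots\subset X_1\times X_2=X$ coming from a log resolution at a very general point $p_2\in X_2$: each peeled-off divisor $X_1\times D_2^{(k)}$ has $A/S$ ratio equal to $A_{X_2}(D_2^{(k)})/S_{L_2}(D_2^{(k)})\ge\delta(X_2;L_2)$, while the bottom of the flag $X_1\times\{p_2\}\cong X_1$ contributes $\delta(X_1;L_1)$ via the final refined-$\delta$ estimate. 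For part~(2) one tracks equalities in whichever chain is used: a divisor $E$ computing $\delta(X,\Delta;L)$ must saturate either the fiber-direction or the horizontal-direction estimates, which in turn forces a corresponding computing divisor on $X_1$ or on $X_2$.
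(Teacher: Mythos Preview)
Your easy direction via pullback is correct and matches the paper. The hard direction, however, has a real gap.

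You correctly observe that the additive estimates $A_{X,\Delta}(v)\ge A_1(v_1)+A_2(v_2)$ and $S_L(v)\ge S_{L_1}(v_1)+S_{L_2}(v_2)$ are useless precisely when the restricted valuations $v_i$ are trivial, as for the diagonal in $\bP^1\times\bP^1$. Your proposed remedy, the ``coupling inequality''
\[
A(v)-A_1(v_1)-A_2(v_2)\ \ge\ \min(\delta_1,\delta_2)\cdot\bigl(S_L(v)-S_{L_1}(v_1)-S_{L_2}(v_2)\bigr),
\]
is not a reduction: when $v_1$ and $v_2$ are both trivial it reads $A(v)\ge\min(\delta_1,\delta_2)\cdot S_L(v)$, which is exactly the inequality to be proved. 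So nothing has been gained, and you give no mechanism to establish it. Your Abban--Zhuang sketch is closer in spirit to what actually works, but as written it is too vague: the flag $X_1\times\{p_2\}\subset X_1\times D_2^{(1)}\subset\cdots$ depends on an arbitrary point and resolution, you do not say how an arbitrary divisor $E$ over $X$ interacts with it, and it is unclear why the refinements along $X_1\times D_2^{(k)}$ should reduce cleanly to $\delta(L_1)$ at the bottom.

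The paper takes a different, very concrete route. For any $c<\min(\delta_1,\delta_2)$ and any divisor $E$ over $X$, it constructs an explicit $m$-basis type divisor $\Gamma_m$ compatible with $\cF_E$ (so that $\ord_E(\Gamma_m)=S_m(E)$) for which $(X,c\Gamma_m)$ is klt along $E$ when $m\gg0$. The construction splits on whether the center of $E$ dominates $X_2$. If it does, one builds a ``basis type filtration of type~(I)'' on $R_{m,2}=H^0(X_2,mL_2)$ from generic point evaluations; tensoring with $R_{m,1}$ gives a filtration $\cG$ on $R_m=R_{m,1}\otimes R_{m,2}$ whose graded pieces are canonically copies of $R_{m,1}$. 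A short combinatorial lemma (Lemma~\ref{lem:filtration property}) shows that a basis of $R_m$ compatible with both $\cF_E$ and $\cG$ restricts, on each graded piece and hence on a general fiber $F=X_1\times\{x\}$, to an honest basis of $H^0(X_1,mL_1)$. Thus $\Gamma_m|_F$ is a convex combination of $m$-basis type divisors of $L_1$, so $(F,c\Gamma_m|_F)$ is klt because $c<\delta(L_1)$, and inversion of adjunction gives kltness of $(X,c\Gamma_m)$ near $F$, hence along $E$. If the center of $E$ does not dominate $X_2$, then $\ord_E$ restricts to a divisorial valuation $\ord_G$ on $X_2$; one uses instead a ``basis type filtration of type~(II)'' on $R_{m,2}$ refining $\cF_G$, and runs the same argument on a general fiber over $G$, with one extra term tracking the coefficient of $\pi_2^*G$ (which is controlled by $c<\delta(L_2)$). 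Part~(2) then drops out by following the equality cases: in the dominant case the restriction of $E$ to a very general fiber computes $\delta(L_1)$, and in the non-dominant case $G$ itself computes $\delta(L_2)$.
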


In particular, this takes care of the product of K-(semi)stable and uniformly K-stable Fano varieties. We note that the analogous product formula for Tian's alpha invariant is well known (see e.g. \cite{Hwang-alpha-product}*{Section 2}, \cite{CS-lct-Fano3fold}*{Lemma 2.29} or \cite{KP-projectivity}*{Proposition 8.11}) and indeed our proof takes inspirations from these works.

For the K-polystable case, we study K-semistable special degenerations of the product to K-semistable Fano varieties and with the help of \cite{LWX18}, we show that they always arise from special degenerations of the factors:

\begin{thm}[=Theorem \ref{thm:tc on product}] \label{main:tc on product}
Let $(X_i,\Delta_i)$ $(i=1,2)$ be K-semistable log Fano pairs and let $(X,\Delta)=(X_1\times X_2, \Delta_1\boxtimes \Delta_2)$. Let $\phi:(\cX,\cD)\rightarrow \bA^1$ be a special test configuration of $(X,\Delta)$ with K-semistable central fiber $(\cX_0,\cD_0)$, then there exists special test configurations $\phi_i:(\cX_i,\cD_i)\rightarrow \bA^1$ $(i=1,2)$ of $(X_i,\Delta_i)$ with K-semistable central fiber such that $(\cX,\cD)\cong (\cX_1\times_{\bA^1} \cX_2, \cD_1\boxtimes \cD_2)$ $($as test configurations, where $\bG_m$ acts diagonally on $\cX_1\times_{\bA^1} \cX_2)$. 
\end{thm}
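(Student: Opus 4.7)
The strategy is to reduce the decomposition of $(\cX,\cD)$ to a decomposition statement for the divisorial valuation associated to $\phi$, apply the product formula for $\delta$ from Theorem \ref{main:delta product} to this valuation, and then reassemble the factor test configurations from its summands.

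\emph{Step 1: Pass to valuations.} Let $v$ be the divisorial valuation on $X$ associated to $\phi$, namely a suitable positive rational multiple of $\ord_{\cX_0}$ viewed as a divisor over $X$ through $\cX\setminus\cX_0 \cong X\times\bG_m$. Since $(X,\Delta)$ is K-semistable, $\delta(X,\Delta)=1$, and K-semistability of the central fiber $(\cX_0,\cD_0)$ forces the generalized Futaki invariant of $(\cX,\cD)$ to vanish. On the valuative side this translates to $A_{X,\Delta}(v) = S_{X,\Delta;-K_X-\Delta}(v)$, i.e.\ $v$ computes $\delta(X,\Delta)=1$. Analogously $\delta(X_i,\Delta_i)=1$.

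\emph{Step 2: Decompose the valuation.} The central claim is that $v$ itself is of product type: there exist valuations $v_i$ on $X_i$ (possibly trivial) satisfying $v(f_1\otimes f_2) = v_1(f_1)+v_2(f_2)$, and each nontrivial $v_i$ computes $\delta(X_i,\Delta_i)=1$. I would extract this by reopening the proof of Theorem \ref{main:delta product} and tracking the equality cases in the estimates $A_X(v)\leq A_{X_1}(v_1)+A_{X_2}(v_2)$ and $S_X(v;L)\geq S_{X_1}(v_1;L_1)+S_{X_2}(v_2;L_2)$ that drive the product formula; equality throughout forces $v$ to split. One also checks that the graded algebra of $v$ decomposes as the tensor product of those of the $v_i$, so finite generation is inherited.

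\emph{Step 3: Reconstruct test configurations.} Since each $v_i$ computes $\delta(X_i,\Delta_i)=1$ and has finitely generated graded algebra, the filtration it defines on the anticanonical ring of $(X_i,\Delta_i)$ yields a special test configuration $\phi_i:(\cX_i,\cD_i)\to\bA^1$ (or the trivial one when $v_i$ is trivial). The vanishing of $A_{X_i,\Delta_i}(v_i)-S_{X_i,\Delta_i;L_i}(v_i)$ makes $\mathrm{Fut}(\phi_i)=0$, and here we invoke \cite{LWX18} to conclude that the central fiber is K-semistable. The diagonal $\bG_m$-action on $\cX_1\times_{\bA^1}\cX_2$ induces on $X$ the valuation $v_1\oplus v_2 = v$ together with the matching polarization; since a special test configuration is determined up to isomorphism by its associated $\bG_m$-equivariant filtration on the section ring of $-K_X-\Delta$, we obtain $(\cX,\cD)\cong (\cX_1\times_{\bA^1}\cX_2,\cD_1\boxtimes\cD_2)$.

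The main obstacle is Step 2: Theorem \ref{main:delta product}(2) only asserts existence of \emph{some} computing divisor on \emph{some} factor, whereas here we need the given $v$ itself to split as $v_1\oplus v_2$. Achieving this requires reopening the product-formula argument and exploiting rigidity in the concavity estimates---most likely the Okounkov body/concave transform analysis underlying the product formula, supplemented by a study of the support of the relevant concave transform on the product Okounkov body, will produce the desired splitting. A secondary point is ensuring that the $v_i$-adic graded algebras are finitely generated; this should follow from the tensor-product structure observed above together with finite generation of the graded algebra of $v$.
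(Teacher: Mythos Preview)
Your plan diverges from the paper's proof at the crucial point, and the gap you flag in Step~2 is real and not closed by the fixes you suggest.

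First, the inequalities you invoke do not appear in the proof of Theorem~\ref{main:delta product}: that argument never compares $A_X(v)$ or $S_X(v)$ to sums $A_{X_1}(v_1)+A_{X_2}(v_2)$, $S_{X_1}(v_1)+S_{X_2}(v_2)$; it works instead by reorganizing basis type divisors and applying inversion of adjunction along a general fiber. In fact the sign in your log-discrepancy estimate is wrong: already for $v=\ord_{E_2}$ the second blow-up over the origin of $\bA^2=\bA^1\times\bA^1$ (along the direction $x+y=0$) one has $v|_{k(x)}=\ord_0$, $v|_{k(y)}=\ord_0$, so $A_1+A_2=2$, while $A_{\bA^2}(v)=3$. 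Thus one expects $A_X(v)\ge A_{X_1}(v_1)+A_{X_2}(v_2)$, not $\le$, and the ``equality forces splitting'' step now requires a genuinely new rigidity statement that is not supplied by the product formula or by an Okounkov-body argument you can point to. Note also that what Remark~\ref{rem:explicit div on factors} and Corollary~\ref{cor:induce div dominant case} actually give, when the center of $v$ dominates $X_2$, is a divisor $E_x$ over each general fiber $X_1\times\{x\}$ computing $\delta(X_1)$; nothing there prevents $E_x$ from varying with $x$, which is exactly what would obstruct $v$ from being of product type. In the paper this is handled only in the K-polystable, torus-equivariant setting (Proposition~\ref{prop:polystable product}), where $\ord_{E_x}=\wt_{\xi_x}$ is forced and $\xi_x$ is then constant by continuity.

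The paper takes a different route that avoids splitting $v$ directly. Using \cite{LWX18}, each $(X_i,\Delta_i)$ has a unique K-polystable degeneration $(Y_i,\Gamma_i)$; by Proposition~\ref{prop:polystable product} the product $(Y_1\times Y_2,\Gamma_1\boxtimes\Gamma_2)$ is K-polystable and hence is the unique K-polystable degeneration of $(X,\Delta)$. Lemma~\ref{lem:tc over A^2} then places the given test configuration and this product degeneration into a single $\bG_m^2$-equivariant family $(\fX,\fD)\to\bA^2$ whose central fiber is the product $Y_1\times Y_2$. The decomposition of $\cX$ is now obtained not from valuation theory but from deformation theory: Lemma~\ref{lem:deform Fano product} shows that a $\bG_m^r$-equivariant log Fano family over $\bA^r$ whose central fiber is a product is itself a fiber product, with uniquely determined factors. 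Restricting to $\bA^1\times\{1\}$ yields the desired splitting of $(\cX,\cD)$. So the statement you are trying to prove about $v$ is in fact a \emph{consequence} of the theorem via \cite{BX-separatedness}, rather than an independent input.
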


Let us briefly explain the ideas of proof as well as the organization of the paper. Section \ref{sec:prelim} put together some preliminary materials on valuations, filtrations, $\delta$-invariant and K-stability. Since $\delta$-invariant is defined using log canonical threshold of basis type divisors, it is not hard to imagine that Theorem \ref{main:delta product} follows from inversion of adjunction and it suffices to show that any basis type divisors can be reorganized into one that restricts to a convex combination of basis type divisors on one of the factors. This is done in Section \ref{sec:product formula} using some auxiliary basis type filtrations constructed in Section \ref{sec:prelim-basis-type}. To address K-polystability, we analyze divisors that compute the $\delta$-invariants. We do so by choosing a maximal torus $\bT$ in the automorphism group of the Fano variety and restricting to $\bT$-invariant divisor. In this setting, equivariant K-polystability behaves somewhat like K-stability and one can give very explicit description of divisors computing $\delta$-invariants. This is made more precise in Section \ref{sec:polystable}. Once we know that product of K-polystable Fano varieties are still K-polystable, since every K-semistable Fano variety has a unique K-polystable degeneration by \cite{LWX18}, the K-semistable degenerations in Theorem \ref{main:tc on product} can be obtained by deforming the K-polystable degenerations (which is a product). But deformations of product of Fano varieties are still product of Fano varieties (see Section \ref{sec:deform product}), this gives the proof of Theorem \ref{main:tc on product}.

\subsection*{Acknowledgement}

The author would like to thank his advisor J\'anos Koll\'ar for constant support, encouragement and numerous inspiring conversations. He also wishes to thank Yuchen Liu and Chenyang Xu for helpful discussions and the anonymous referee for helpful comments. This material is based upon work supported by the National Science Foundation under Grant No. DMS-1440140 while the author was in residence at the Mathematical Sciences Research Institute in Berkeley, California, during the Spring 2019 semester. 
\section{Preliminary} \label{sec:prelim}

\subsection{Notation and conventions}

We work over the field $\bC$ of complex numbers. Unless otherwise specified, all varieties are assumed to be normal. We follow the terminologies in \cite{KM98}. A fibration is a morphism with connected fibers. A projective variety $X$ is $\bQ$-\emph{Fano} if $X$ has klt singularities and $-K_X$ is ample. A pair $(X,\Delta)$ is \emph{log Fano} if $X$ is projective, $-K_X-\Delta$ is $\bQ$-Cartier ample and $(X,\Delta)$ is klt. Let $(X,\Delta)$ be a pair and $D$ a $\bQ$-Cartier divisor on $X$, the \emph{log canonical threshold}, denoted by $\lct(X,\Delta;D)$ (or simply $\lct(X;D)$ when $\Delta=0$), of $D$ with respect to $(X,\Delta)$ is the largest number $t$ such that $(X,\Delta+tD)$ is log canonical. Let $X_i$ $(i=1,2,\cdots,m)$ be varieties over $S$, let $D_i$ be $\bQ$-divisors on $X_i$ and let $X=X_1\times_S \cdots\times_S X_m$ with projections $\pi_i:X\to X_i$, then we denote the divisor $\sum_{i=1}^m \pi_i^*D_i$ by $D_1\boxtimes\cdots\boxtimes D_m$. If $L$ is a $\bQ$-Cartier divisor on a variety $X$, we set $M(L)$ to be the set of integers $r$ such that $rL$ is Cartier and $H^0(X,rL)\neq 0$.

\subsection{Valuations}

Let $X$ be a variety. A valuation on $X$ will mean a valuation $v: K(X)^\times \to \bR$ that is trivial on the base field $\bC$. We write $\Val_X$ for the set of valuations on $X$ that also has center on $X$. A valuation $v$ is said to be divisorial if there exists a divisor $E$ over $X$ such that $v=c\cdot\ord_E$ for some $c\in\bQ_{>0}$. Let $(X,\Delta)$ be a pair. We write 
\[
A_{X,\Delta}\colon \Val_X\to \bR_{\geq 0} \cup \{ +\infty \}
\]
for the log discrepancy function with respect to  $(X,\Delta)$ as in \cites{JM-valuation,BdFFU}. We may simply write $A_X(\cdot)$ if $\Delta=0$. In particular, $A_{X,\Delta}(c\cdot\ord_E)=c\cdot A_{X,\Delta}(E)$ where $A_{X,\Delta}(E)$ is the usual log discrepancy of $E$ with respect to $(X,\Delta)$ (see e.g. \cite{Kol-mmp}*{Definition 2.4}). If $L$ is a line bundle on $X$, $v\in \Val_X$ and $s\in H^0(X,L)$, we can define $v(s)$ by trivializing $L$ at the center of $v$ and set $v(s)=v(f)$ where $f$ is the local function corresponding to $s$ under this trivialization (this is independent of choice of trivialization).

\begin{lem} \label{lem:restrict val}
Let $X\dashrightarrow X'$ be a dominant rational map of varieties and let $K'\subseteq K$ be the corresponding inclusion of their functions fields. Let $v$ be a divisorial valuation on $X$. Then its restriction to $K'$ is either trivial or a divisorial valuation on $X'$.
\end{lem}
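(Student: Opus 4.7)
The approach is to use the equality case of Abhyankar's inequality as the characterization of divisorial valuations: a valuation $w$ on a finitely generated field extension $F/\bC$ is divisorial if and only if $\mathrm{rat.rk}(w) = 1$, its value group is isomorphic to $\bZ$, and $\mathrm{tr.deg}_\bC k_w = \mathrm{tr.deg}_\bC F - 1$, where $k_w$ denotes the residue field of $w$.

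First I would write $v = c\cdot\ord_E$ for some $c\in\bQ_{>0}$ and prime divisor $E$ over $X$, and set $w := v|_{K'}$. The value group of $w$ is a subgroup of $v(K^\times) = c\bZ$, hence is either trivial (in which case $w$ is the trivial valuation and we are done) or infinite cyclic. Assume we are in the latter case; then $\mathrm{rat.rk}(w) = 1$ and Abhyankar's inequality applied to $w$ immediately gives
\[
\mathrm{tr.deg}_\bC k_w \leq \dim X' - 1.
\]

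The main step is the matching lower bound. For this I would invoke the standard Abhyankar inequality for extensions of valuations (see, e.g., Zariski--Samuel, \emph{Commutative Algebra}, Vol.~II, Ch.~VI): since $w = v|_{K'}$,
\[
\bigl(\mathrm{rat.rk}(v) - \mathrm{rat.rk}(w)\bigr) + \mathrm{tr.deg}_{k_w} k_v \leq \mathrm{tr.deg}_{K'} K.
\]
With $\mathrm{rat.rk}(v) = \mathrm{rat.rk}(w) = 1$ and $\mathrm{tr.deg}_{K'} K = \dim X - \dim X'$, this reduces to $\mathrm{tr.deg}_{k_w} k_v \leq \dim X - \dim X'$. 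Since $v$ is divisorial we have $\mathrm{tr.deg}_\bC k_v = \dim X - 1$, so
\[
\mathrm{tr.deg}_\bC k_w \geq (\dim X - 1) - (\dim X - \dim X') = \dim X' - 1.
\]
Combining the two bounds gives equality, and the Abhyankar equality characterization recalled above forces $w$ to be divisorial on $X'$.

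The only non-mechanical ingredient is the extension inequality, but it is classical. A more geometric alternative would be to resolve $X\dashrightarrow X'$ to a morphism $\pi:Y\to X'$ on a birational model $Y\to X$ that carries $E$ as a prime divisor, and then analyze $Z := \pi(E)$: the case $Z = X'$ gives $w$ trivial (any nonzero $f\in K'$ satisfies $\ord_E(\pi^*f) = 0$); the case $\codim_{X'}Z = 1$ gives $w = ce\cdot\ord_Z$ with $e$ the ramification index; and $\codim_{X'} Z \geq 2$ is reduced to the previous cases by blowing up $X'$ along $Z$ and iterating until the image of $E$ becomes a divisor. The valuation-theoretic route is shorter and requires no termination argument.
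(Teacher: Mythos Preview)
Your proof is correct and is essentially the same argument as the paper's: both use the Abhyankar--Zariski inequality for the extension $K/K'$ to pin down $\mathrm{rat.rk}(w)=1$ and $\mathrm{tr.deg}_\bC k_w=\dim X'-1$, and then invoke the Abhyankar/Zariski characterization of divisorial valuations. The paper packages the extension inequality as $\mathrm{tr.deg}(v)+\mathrm{rat.rk}(v)\le \mathrm{tr.deg}(v')+\mathrm{rat.rk}(v')+\mathrm{tr.deg}(K/K')$ and cites \cite[Lemma 2.45]{KM98} for the final step, but this is just a rearrangement of what you wrote.
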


\begin{proof}
This is well known to experts but we provide a proof for reader's convenience (c.f. \cite[Lemma 4.1]{BHJ}). Let $v'$ be the restriction of $v$ to $K'$. By the Abhyankar-Zariski inequality, we have
\[
{\rm tr. deg}(v)+{\rm rat. rk}(v)\le {\rm tr. deg}(v')+{\rm rat. rk}(v')+{\rm tr. deg}(K/K')
\]
where tr.deg (resp. rat.rk) denotes the transcendence degree (resp. rational rank) of the valuation. Since $v$ is divisorial, we have ${\rm rat. rk}(v)=1$ and ${\rm tr. deg}(v)=\dim X-1$, thus by the above inequality we obtain
\[
{\rm tr. deg}(v')+{\rm rat. rk}(v') \ge \dim X'.
\]
Since the reverse inequality always holds by Abhyankar-Zariski inequality and ${\rm rat. rk}(v')\le {\rm rat. rk}(v)=1$, we see that either ${\rm rat. rk}(v')=0$, in which case $v'$ is trivial; or ${\rm rat. rk}(v')=1$ and ${\rm tr. deg}(v')=\dim X'-1$, in which case $v'$ is a divisorial valuation by a theorem of Zariski (see e.g. \cite[Lemma 2.45]{KM98}).
\end{proof}

Let $\bT=\bG_m^r$ be a torus and let $X$ be a $\bT$-variety (i.e. a variety with a faithful action of $\bT$). Then for any $\bT$-invariant open affine subset $X_0$ of $X$ and any $f\in k[X_0]$ we have a weight decomposition
\[f=\sum_{\lambda\in M(\bT)} f_{\lambda}\]
where $M(\bT)\cong \bZ^r$ is the character group of $\bT$. In particular, let $N(\bT)$ be the lattice of one parameter subgroups of $\bT$, then for any $\xi\in N(\bT)_\bR := N(\bT)\otimes_\bZ \bR \cong \bR^r$, we can associate a $\bT$-invariant valuation
\[\wt_\xi (f) = \min_{\lambda\in M(\bT),\, f_\lambda\neq 0} \lambda\cdot \xi\]
on $X$ using the natural paring $M(\bT)\times N(\bT)\to \bZ$. It is divisorial if and only if $\xi\in N(\bT)_\bQ := N(\bT)\otimes_\bZ \bQ$. Let $K=k(X)^\bT$, then any valuation $v$ on $X$ induces a valuation $r(v)$ on $K$ by restriction. On the other hand, for any $\bT$-invariant valuation $v$ on $X$ and any $\xi\in N(\bT)_\bR$, it is not hard to check that (see e.g. \cite[Section 11]{AIPSV})
\[v_\xi (f):=\min_{\lambda\in X(\bT)} (v(f_{\lambda})+\lambda\cdot \xi)\]
defines another $\bT$-invariant valuation on $X$. This defines an action of $N(\bT)_\bR$ on the set of $\bT$-invariant valuations: $\xi\circ v\mapsto v_\xi$.

\begin{lem} \label{lem:T-valuation}
    \begin{enumerate}
        \item Any valuation $v_0$ on $K$ extends to a $\bT$-invariant valuation $v$ on $X$ such that $r(v)=v_0$.
        \item If $v$, $w$ are $\bT$-invariant valuations on $X$ such that $r(v)=r(w)$, then there exists $\xi\in N(\bT)_\bR$ such that $w=v_\xi$. In addition, $w$ is divisorial if $v$ is divisorial and $\xi\in N(\bT)_\bQ$.
    \end{enumerate}
\end{lem}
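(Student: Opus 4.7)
The plan is to construct the extension in part (1) using the weight decomposition, and in part (2) to read off the twist parameter directly from the values on semi-invariants.

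For part (1), I would first argue that since $\bT$ acts faithfully on $X$, every character of $\bT$ arises as the weight of some nonzero semi-invariant rational function on $X$: otherwise the weight sublattice $\Lambda \subsetneq M(\bT)$ would have a nontrivial annihilator in $\bT$, which would act trivially on all semi-invariants and hence on $X$, contradicting faithfulness. Since $M(\bT)$ is free, the short exact sequence $1 \to K^\times \to k(X)^\times_{\rm semi} \to M(\bT) \to 1$ splits, so I may pick a multiplicative family of semi-invariants $g_\lambda \in k(X)^\times$ of weight $\lambda$ (that is, $g_{\lambda+\mu} = g_\lambda g_\mu$, with $g_0 = 1$). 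Every $f \in k(X)^\times$ has a finite weight decomposition $f = \sum_\lambda f_\lambda$ with $f_\lambda/g_\lambda \in K$, so I define $v(f) := \min_\lambda v_0(f_\lambda/g_\lambda)$. The multiplicativity of $g_\bullet$ together with the valuation axioms of $v_0$ make $v$ a valuation; $\bT$-invariance is automatic since $\bT$ only scales each $f_\lambda$ by a unit; and $r(v) = v_0$ by evaluation on weight-$0$ functions.

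For part (2), the core observation is that for any $\bT$-invariant valuation $w$ on $X$ and any weight decomposition $f = \sum_{i=1}^n f_{\lambda_i}$, one has $w(f) = \min_i w(f_{\lambda_i})$. One direction is the usual valuation inequality. For the reverse, linear independence of the distinct characters $\chi_{\lambda_i}$ on $\bT$ (Dedekind's theorem) lets me choose $t_1, \ldots, t_n \in \bT$ for which the matrix $(\chi_{\lambda_j}(t_i))_{ij}$ is invertible; then each $f_{\lambda_j}$ is a $\bC$-linear combination of the translates $t_i \cdot f$, and $\bT$-invariance of $w$ gives $w(f_{\lambda_j}) \ge w(f)$. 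Now given $v, w$ with $r(v) = r(w) = v_0$ and any nonzero semi-invariant $h_\lambda$ of weight $\lambda$, define $\xi(\lambda) := w(h_\lambda) - v(h_\lambda)$. This is independent of $h_\lambda$ (any two such differ by an element of $K$ on which $v$ and $w$ agree) and additive in $\lambda$ (by valuation multiplicativity), so it defines an element $\xi \in \Hom(M(\bT), \bR) = N(\bT)_\bR$. Applying the core observation term by term, $w(f) = \min_\lambda w(f_\lambda) = \min_\lambda(v(f_\lambda) + \xi(\lambda)) = v_\xi(f)$, whence $w = v_\xi$.

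For the divisoriality statement, assume $v$ divisorial and $\xi \in N(\bT)_\bQ$. The value group of $w = v_\xi$ is contained in the finitely generated subgroup $v(k(X)^\times) + \xi \cdot M(\bT) \subset \bQ$, so $\mathrm{rat.rk}(w) = 1$. For the transcendence-degree equality $\mathrm{tr.deg}(w) = \dim X - 1$, I plan to pass to a $\bT$-equivariant birational trivialization $X \sim Y \times \bT$ (available since a faithful torus action is generically free, and the generic $\bT$-torsor over a model $Y$ with $k(Y) = K$ is trivial after Zariski localization on $Y$) and realize $v_\xi$ explicitly as a rational multiple of the order of vanishing along a prime divisor on such a model, built from a divisor computing $v_0$ on $Y$ combined with a twist coming from the $1$-PS associated to $\xi$ (after scaling so that the $1$-PS is integral). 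Zariski's theorem (\cite{KM98}*{Lemma 2.45}, already referenced in the paper) then gives divisoriality. The main obstacle is precisely this geometric realization: while the value-group computation is immediate, exhibiting $v_\xi$ concretely as an order of vanishing along an actual prime divisor on a suitable equivariant birational model requires some care.
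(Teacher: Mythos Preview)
Your approach is correct and more self-contained than the paper's, which in its entirety reads: $X$ is $\bT$-equivariantly birational to $Y\times\bT$ with $k(Y)=K$, so one may assume $X=Y\times\bT$, and then both statements follow by an inductive application of \cite[Lemma~4.2]{BHJ} (the $\bG_m$ case). Your direct construction of the extension in (1) via a multiplicative section $\lambda\mapsto g_\lambda$, and the Dedekind/Vandermonde argument for the identity $w(f)=\min_\lambda w(f_\lambda)$ in (2), are valid and have the merit of not hiding everything behind a citation.

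Two remarks. First, a small inaccuracy in (1): not every $f\in k(X)^\times$ has a finite weight decomposition (e.g.\ $1/(1+t)$ on $\bG_m$). You should instead define $v$ on the coordinate ring $k[X_0]$ of a $\bT$-invariant affine open, where weight decompositions exist, verify the valuation axioms there, and then extend to $k(X)$ by $v(f/g)=v(f)-v(g)$. Note also that multiplicativity $v(fg)=v(f)+v(g)$ is not automatic from ``multiplicativity of $g_\bullet$ and the valuation axioms of $v_0$''; it needs the standard Gauss-lemma trick with a monomial order on $M(\bT)$ to rule out cancellation in the lowest term. Second, for the divisoriality clause you end up invoking the birational model $Y\times\bT$ anyway, which is exactly the paper's starting point; once there, your sketch of realizing $v_\xi$ as order of vanishing along a prime divisor is precisely the content of \cite[Lemma~4.2]{BHJ}. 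So for this final step your argument and the paper's converge, and you have correctly located where the genuine work lies.
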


\begin{proof}
$X$ is $\bT$-equivariantly birational to $Y\times \bT$ for some variety $Y$ (on which $\bT$ acts trivially) with $K=k(Y)$, thus it suffices to prove the lemma when $X=Y\times \bT$, in which case both statements follows from an inductive use of \cite[Lemma 4.2]{BHJ}.
\end{proof}



\subsection{Filtrations}

Let $V$ be a finite dimensional vector space. A filtration $\cF$ of $V$ is given by a family of vector subspaces $\cF^\lambda V$ ($\lambda\in\bR$) such that
\begin{enumerate}
    \item $\cF^\lambda V \subseteq \cF^\mu V$ whenever $\lambda\ge \mu$;
    \item $\cF^0 V=V$ and $\cF^\lambda V=0$ for $\lambda\gg 0$;
    \item For all $\lambda\in\bR$, $\cF^\lambda V = \cF^{\lambda-\epsilon}$ for some $\epsilon>0$ depending on $\lambda$.
\end{enumerate}
It is called an $\bN$-filtration if $\cF^\lambda V=\cF^{\lceil \lambda \rceil} V$ for all $\lambda\in \bR$.

Let $L$ be an ample line bundle on a projective variety $X$ of dimension $n$. Let
\[
R:=R(X,L)=\bigoplus_{M\in\bN} R_m = \bigoplus_{M\in\bN} H^0(X,mL)
\]
be the section ring of $L$. A ($\bN$-)filtration $\cF$ of $R$ is defined as a collection of ($\bN$-)filtrations $\cF^\bullet R_m$ of $R_m$ such that $\cF^\lambda R_m\cdot \cF^\mu R_\ell \subseteq \cF^{\lambda+\mu} R_{m+\ell}$ for all $\lambda,\mu\in\bR$ and all $m,\ell\in \bN$. A filtration $\cF$ of $R$ is said to be linearly bounded if there exists some constant $C>0$ such that $\cF^{Cm}R_m=0$ for all $m\in\bN$. As a typical example, every valuation $v\in\Val_X$ induces a filtration $\cF_v$ on $R$ by setting $\cF^\lambda R_m=\{s\in R_m\,|\, v(s)\ge \lambda \}$. When $v=\ord_E$ is divisorial (where $E$ is a divisor over $X$), the induced filtration is linearly bounded; in this case we also denote the filtration by $\cF_E$.

\subsection{$\delta$-invariant} \label{sec:prelim-delta}

Let $(X,\Delta)$ be a klt pair and let $L$ be a $\bQ$-Cartier $\bQ$-divisor on $X$. Let $M(L)$ be the set of integers $m$ such that $mL$ is Cartier and $H^0(X,mL)\neq 0$. A divisor $D\sim_\bQ L$ is said to be an $m$-basis type $\bQ$-divisor of $L$ if there exists a basis $s_1,\cdots,s_{N_m}$ (where $N_m=\dim H^0(X,mL)$) of $H^0(X,mL)$ such that 
\[
D=\frac{1}{mN_m}\sum_{i=1}^{N_m} \{s_i=0\}.
\]
If $\cF$ is a filtration on $H^0(X,mL)$, an $m$-basis type $\bQ$-divisor as above is said to be compatible with $\cF$ if every subspace $\cF^\lambda H^0(X,mL)$ is spanned by some $s_i$ (c.f. \cite{AZ-K-adjunction}*{Definitions 1.5 and 2.18}). Let $v\in\Val_X$ be a valuation such that $A_{X,\Delta}(v)<\infty$. Following \cite{BJ-delta}, we define 
\[
S_m(L;v):=\sup_D v(D)
\]
where the supremum runs over all $m$-basis type $\bQ$-divisors $D$ of $L$ and set 
\[
S(L;v):=\lim_{m\to\infty} S_m(L;v).
\]
If $E$ is a divisor over $X$, we also set $\cF_E=\cF_{\ord_E}$ and $S(L;E)=S(L;\ord_E)$. Note that $\cF_E$ is an $\bN$-filtration and if $\pi:Y\to X$ is a birational morphism such that $Y$ is smooth and $E$ is a divisor on $Y$, then 
\[
S(L;E) = \frac{1}{(L^n)} \int_0^\infty \vol(\pi^*L-xE)\, {\rm d} x.
\]
We will simply write $S(v)$ or $S(E)$ if the divisor $L$ is clear from the context. It is easy to see that $S_m(v)=v(D)$ for any $m$-basis type $\bQ$-divisors $D$ that's compatible with $\cF_v$.

\begin{defn}[\cites{FO-delta,BJ-delta}] \label{defn:delta}
The $\delta$-invariant (or adjoint stability threshold) of $L$ is defined as 
\[
\delta(L):=\limsup_{m\in M(L),\, m\to \infty} \delta_m(L)
\]
where $\delta_m(L)$ is the largest $t>0$ such that $(X,\Delta+tD)$ is lc for all $m$-basis type $\bQ$-divisor $D\sim_\bQ L$. Occasionally the notation $\delta(X,\Delta;L)$ is also used to indicate which pair we are using. If $(X,\Delta)$ is a log Fano pair, we also define $\delta(X,\Delta):=\delta(-K_X-\Delta)$.
\end{defn}

\begin{thm}[\cite{BJ-delta}*{Theorems A,C and Proposition 4.3}] \label{thm:delta as inf}
Notation as above and assume that $L$ is a big line bundle on $X$. Then the above limsup is a limit and we have
\[
\delta_m(L) = \inf_E \frac{A_{X,\Delta}(E)}{S_m(E)} = \inf_{v} \frac{A_{X,\Delta}(v)}{S_m(v)},\quad\delta(L) = \inf_E \frac{A_{X,\Delta}(E)}{S(E)} = \inf_{v} \frac{A_{X,\Delta}(v)}{S(v)}
\]
where in both equalities the first infimum runs through all divisors $E$ over $X$ and the second through all $v\in \Val_X$ with $A_{X,\Delta}(v)< +\infty$.
\end{thm}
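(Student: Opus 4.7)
The plan is to establish the two finite-level identities for $\delta_m(L)$ first, and then pass to the limit as $m\to\infty$.

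For the finite level, I start from the defining formula $\delta_m(L)=\inf_D \lct(X,\Delta;D)$, where $D$ ranges over $m$-basis type $\bQ$-divisors, and combine it with the standard valuative characterization $\lct(X,\Delta;D)=\inf_v A_{X,\Delta}(v)/v(D)$. For each valuation $v$ with finite log discrepancy, I choose a basis of $H^0(X,mL)$ adapted to the filtration $\cF_v$; the associated $m$-basis type divisor $D_v$ satisfies $v(D_v)=S_m(v)$, and by definition no $m$-basis type divisor can give a larger value. Swapping the two infima then yields $\delta_m(L)=\inf_v A_{X,\Delta}(v)/S_m(v)$. To descend further to divisors $E$ over $X$, I use the standard MMP fact that in a klt pair the log canonical threshold of any $\bQ$-Cartier divisor is computed by a divisorial valuation: for each $D$ there is an $E$ with $\lct(X,\Delta;D)=A(E)/E(D)\geq A(E)/S_m(E)$, so $\delta_m(L)\geq \inf_E A(E)/S_m(E)$; the reverse inequality is immediate from the previous identity.

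For the limit, the easy direction $\limsup_m \delta_m(L)\leq \inf_v A_{X,\Delta}(v)/S(v)$ follows by fixing $v$ and passing to the limit in $\delta_m(L)\leq A(v)/S_m(v)$, using the pointwise convergence $S_m(v)\to S(v)$. That convergence is supplied by the Boucksom-Chen concave-transform formalism for filtered linear series (equivalently, by Fujita approximation applied to the Newton-Okounkov body of $L$ decorated with $\cF_v$), and when $v=\ord_E$ it specializes to the integral formula stated in Section~\ref{sec:prelim-delta}. The opposite inequality requires producing a limit valuation: for each $m$ pick a divisor $E_m$ over $X$ with $A(E_m)/S_m(E_m)\leq \delta_m(L)+1/m$, and rescale so that $S_m(\ord_{E_m})=1$.

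The main obstacle is compactness: without extra input the normalized sequence $\{\ord_{E_m}\}$ might escape to infinity. The key technical input is a uniform Izumi-type estimate giving a constant $C=C(X,\Delta,L)$ with $S_m(v)\leq C\cdot A_{X,\Delta}(v)$ for every nontrivial $v$ and every $m\in M(L)$. After the normalization this forces $A_{X,\Delta}(\ord_{E_m})\leq C$, and the corresponding valuations then lie in a compact subset of $\Val_X$ in the Berkovich topology. I extract a subsequential limit $v_\infty$, use lower semicontinuity of $A_{X,\Delta}$ together with the upgrade of $S_m\to S$ to uniform convergence on this compact set (an equicontinuity statement for $\cF_v$ in $v$, again obtained via Izumi), and conclude $A(v_\infty)/S(v_\infty)\leq \liminf_m \delta_m(L)$. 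This simultaneously gives the existence of the limit, the identity $\delta(L)=\inf_v A_{X,\Delta}(v)/S(v)$, and, by density of divisorial valuations together with continuity of $A$ and $S$ on the $\{A\leq \Lambda\}$ locus, the equality with $\inf_E$ in place of $\inf_v$.
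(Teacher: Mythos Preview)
The paper does not supply its own proof of this statement: it is quoted verbatim from \cite{BJ-delta} (Theorems~A, C and Proposition~4.3) and used as a black box. So there is no ``paper's proof'' to compare against beyond the citation.

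That said, your sketch is a faithful outline of the Blum--Jonsson argument and is essentially correct. The finite-level identity via swapping the two infima and choosing a basis compatible with $\cF_v$ is exactly how \cite{BJ-delta} establishes the formula for $\delta_m$; the reduction to divisorial valuations via the fact that log canonical thresholds on klt pairs are computed by divisors is also standard. For the limit, the scheme you describe---an Izumi-type bound $S_m(v)\le C\cdot A_{X,\Delta}(v)$ to force the normalized minimizing sequence into a compact subset of $\Val_X$, followed by lower semicontinuity of $A_{X,\Delta}$ and uniform convergence $S_m\to S$ on such compacta---is precisely the strategy of \cite{BJ-delta}. One small caveat: in your last sentence you appeal to ``continuity of $A$'' on the sublevel sets $\{A\le\Lambda\}$ to pass from $\inf_v$ to $\inf_E$, but $A_{X,\Delta}$ is only lower semicontinuous in general. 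The correct route (and the one taken in \cite{BJ-delta}) is to approximate an arbitrary $v$ with $A(v)<\infty$ by divisorial valuations along its quasi-monomial retractions, for which one has $A(v_\bullet)\to A(v)$ and $S(v_\bullet)\to S(v)$ directly; alternatively, one simply observes that the $\delta_m$-identity already holds with divisors, and the limit argument can be run with the divisorial $E_m$'s themselves.
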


In view of this theorem, we say that a divisor $E$ over $X$ computes $\delta(L)$ if $\delta(L)=\frac{A_{X,\Delta}(E)}{S(E)}$. 

\subsection{K-stability}

We refer to \cites{Tian-K-stability-defn,Don-K-stability-defn} for the original definition of K-stability. Here we define this notion using valuations and $\delta$-invariant. The equivalence of this definition with the original one is shown by the work of \cites{Fujita-valuative-criterion,FO-delta,Li-equivariant-minimize,BJ-delta,LWX18,BX-separatedness}.

\begin{defn}
Let $(X,\Delta)$ be a log Fano pair. A \emph{special test configuration} $(\cX,\cD)/\bA^1$ of $(X,\Delta)$ consists of the following data:
    \begin{enumerate}
        \item a normal variety $\cX$, a flat projective morphism $\pi\colon\cX \to \bA^1$, together with an effective $\bQ$-divisor $\cD$ on $\cX$ that does not contain any fiber of $\pi$ in its support such that $-(K_\cX+\cD)$ is $\pi$-ample;
        \item a $\bG_m$-action on $(\cX,\cD)$ such that $\pi$ is $\bG_m$-equivariant with respect to the standard action of $\bG_m$ on $\bA^1$ via multiplication;
        \item $(\cX,\cD)\times_{\bA^1} (\bA^1\setminus\{0\})$
        is $\bG_m$-equivariantly isomorphic to $(X,\cD)\times(\bA^1\setminus\{0\})$;
        \item $(\cX,\cX_0+\cD)$ is plt where $\cX_0=\pi^{-1}(0)$.
    \end{enumerate}
A special test configuration is called a \emph{product} test configuration if $(\cX,\cD)\cong(X,\Delta)\times\bA^1$. 
\end{defn}

We say that $(X,\Delta)$ \emph{specially degenerates to} $(X_0,\Delta_0)$ if there exists a special test configuration of $(X,\Delta)$ with central fiber $(X_0,\Delta_0)$ (by adjunction, it is a log Fano pair). By \cite[Lemma 3.1]{LWX18}, a special test configuration $(\cX,\cD)\to \bA^1$ has K-semistable central fiber $(\cX_0,\cD_0)$ if and only if $\Fut(\cX,\cD)=0$ where $\Fut(\cX,\cD)$ is the generalized Futaki invariant (sometimes called Donaldson-Futaki invariant) of the test configuration. 

\begin{defn}
Let $(X,\Delta)$ be a log Fano pair. It is 
    \begin{enumerate}
        \item K-semistable if $\delta(X,\Delta)\ge 1$;
        \item K-stable if $A_{X,\Delta}(E)>S(E)$ for all divisors $E$ over $X$;
        \item uniformly K-stable if $\delta(X,\Delta)>1$;
        \item K-polystable if it is K-semistable and any K-semistable special degeneration $(X_0,\Delta_0)$ of $(X,\Delta)$ comes from a product test configuration.
    \end{enumerate}
\end{defn}

The following statement is a reformulation of \cite[Theorem 1.4]{LWX18}.

\begin{thm}[\cite{LWX18}] \label{thm:T-polystable}
Let $(X,\Delta)$ be a log Fano pair and let $\bT$ be a maximal torus in $\Aut(X,\Delta)$. Then $(X,\Delta)$ is K-polystable if and only if it is K-semistable and $A_{X,\Delta}(v)>S(v)$ for all $\bT$-invariant divisorial valuations $v$ unless $v=\wt_\xi$ for some $\xi\in N(\bT)_\bQ$.
\end{thm}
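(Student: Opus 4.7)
The plan is to translate the statement into the language of $\bT$-equivariant special test configurations and invoke \cite{LWX18}*{Theorem 1.4} directly.

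First I would establish a dictionary, up to positive rescaling, between $\bT$-invariant divisorial valuations $v$ on $X$ and non-trivial $\bT$-equivariant special test configurations $(\cX,\cD)$ of $(X,\Delta)$ with K-semistable central fiber. In one direction, $\bT$-invariance of $v$ means $\bT$ preserves the filtration $\cF_v$ on each graded piece of the section ring $R=\bigoplus_{m\ge 0}R_m$ with $R_m=H^0(X,-mr(K_X+\Delta))$ for suitable $r$, so the Rees construction
\[
\cX := \mathrm{Proj}_{\bA^1}\bigoplus_{m,\lambda\in\bZ}\cF_v^{\lambda}R_m\cdot t^{-\lambda}
\]
carries commuting actions of $\bT$ and of the standard $\bG_m$, yielding a $\bT$-equivariant special test configuration after adjoining the closure $\cD$ of $\Delta\times(\bA^1\setminus\{0\})$; see \cite{BHJ}. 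Conversely, the restriction of $\ord_{\cX_0}$ to $K(X)$ via the trivialization over $\bA^1\setminus\{0\}$ yields a $\bT$-invariant divisorial valuation by Lemma~\ref{lem:restrict val}, and these two operations are mutually inverse up to scaling. Under this dictionary, product test configurations with non-trivial $\bG_m$-action correspond precisely to valuations $v=\wt_\xi$ with $0\ne\xi\in N(\bT)_\bQ$, since such a product test configuration is generated by a one-parameter subgroup of $\Aut(X,\Delta)$, and $\bT$-equivariance plus maximality of $\bT$ force this subgroup to lie in $\bT$. Finally, by \cite{Fujita-valuative-criterion} together with \cite{LWX18}*{Lemma 3.1} and Theorem~\ref{thm:delta as inf}, K-semistability of $(\cX_0,\cD_0)$ is equivalent (under K-semistability of $(X,\Delta)$) to the equality $A_{X,\Delta}(v)=S(v)$.

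With this dictionary in hand, I would invoke \cite{LWX18}*{Theorem 1.4}, which asserts that $(X,\Delta)$ is K-polystable if and only if it is K-semistable and every $\bT$-equivariant special test configuration with K-semistable central fiber is a product. The reduction from arbitrary special test configurations to $\bT$-equivariant ones exploits maximality of $\bT$ in $\Aut(X,\Delta)$: one conjugates the $\bG_m$-action of the test configuration by a suitable automorphism to bring it inside $\bT$. Translating the right-hand side through the dictionary gives exactly the assertion that $(X,\Delta)$ is K-polystable if and only if it is K-semistable and every $\bT$-invariant divisorial valuation $v$ with $A_{X,\Delta}(v)=S(v)$ has the form $v=\wt_\xi$ for some $\xi\in N(\bT)_\bQ$; this is the contrapositive of the stated criterion.

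The main technical point is the faithful matching of equivariance in the dictionary—namely, showing that the Rees degeneration of a $\bT$-invariant valuation carries a commuting $\bT$-action (beyond the tautological $\bG_m$-action from the grading), and conversely that the valuation extracted from a $\bT$-equivariant test configuration is genuinely $\bT$-invariant. Both reduce to $\bT$-stability of $\cF_v^\bullet$, which is automatic from $\bT$-invariance of $v$, together with the fact that the $\bT$-action on $\cX$ preserves the central fiber $\cX_0$.
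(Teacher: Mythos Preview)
Your approach is essentially the same as the paper's: invoke \cite{LWX18}*{Theorem 1.4} to reduce to $\bT$-equivariant special test configurations, then translate through the dictionary between such test configurations (with K-semistable central fiber) and $\bT$-invariant divisorial valuations with $A=S$, noting that product test configurations correspond to $\wt_\xi$ by maximality of $\bT$.

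Two small points where the paper is tighter. First, for the dictionary you build the Rees degeneration by hand and cite \cite{BHJ}; but the non-trivial content is that the resulting test configuration is \emph{special} (finitely generated graded ring with plt central fiber), which \cite{BHJ} does not supply. The paper simply cites \cite{BX-separatedness}*{Theorem 4.1} for the full correspondence, and that is the correct reference here. Second, your parenthetical explanation of \cite{LWX18}*{Theorem 1.4}---that one conjugates the $\bG_m$-action into $\bT$---is not how that reduction works (an arbitrary special test configuration need not be conjugate to a $\bT$-equivariant one); the actual argument in \cite{LWX18} goes through the existence and uniqueness of K-polystable degenerations. Since you use the theorem as a black box this does no harm to your proof, but the gloss is misleading and should be dropped.
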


\begin{proof}
By definition and Theorem \ref{thm:delta as inf} we have $A_{X,\Delta}(v)\ge S(v)$ for all divisorial valuations $v$ whenever $(X,\Delta)$ is K-semistable. By \cite[Theorem 1.4]{LWX18}, $(X,\Delta)$ is K-polystable if and only if it's $\bT$-equivariantly K-polystable, i.e. in the definition of K-polystability, it suffices to consider $\bT$-equivariant special test configurations. By \cite[Theorem 4.1]{BX-separatedness}, ($\bT$-equivariant) special degenerations of $(X,\Delta)$ to K-semistable log Fano pairs correspond to ($\bT$-invariant) divisorial valuations $v$ for which $A_{X,\Delta}(v)=S(v)$. Since $\bT$ is a maximal torus in $\Aut(X,\Delta)$, $\bT$-equivariant product test configurations all come from one parameter subgroups of $\bT$, thus correspond to valuations of the form $v=\wt_\xi$ for some $\xi\in N(\bT)_\bQ$.
\end{proof}

\begin{lem} \label{lem:tc over A^2}
Let $(X,\Delta)$ be a K-semistable log Fano pair and let $\phi_i:(\cX_i,\cD_i)\to \bA^1$ $(i=1,2)$ be two special test configurations of $(X,\Delta)$ with K-semistable central fibers. Then there exists a $\bG_m^2$-equivariant projective morphism $\psi:(\fX,\fD)\to \bA^2$ such that
    \begin{enumerate}
        \item $-(K_\fX+\fD)$ is $\bQ$-Cartier and for all $t\in \bA^2$, the fibers $(\fX_t,\fD_t)$ are K-semistable log Fano pairs;
        \item $(\fX,\fD)\times_{\bA^2} \bG_m^2 \cong (X,\Delta)\times \bG_m^2$;
        \item $(\fX,\fD)\times_{\bA^2} (\bA^1\times \{1\})\cong (\cX_1,\cD_1)$ over $\bA^1$ and similarly $(\fX,\fD)\times_{\bA^2} (\{1\} \times \bA^1)\cong (\cX_2,\cD_2)$.
    \end{enumerate}
\end{lem}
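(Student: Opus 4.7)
The plan is to construct $(\fX,\fD)$ as $\mathrm{Proj}$ of a bigraded Rees-type algebra built from the two special test configurations, and then verify the stated properties. By Theorem \ref{thm:T-polystable} and the underlying bijection from \cite{BX-separatedness}, each $\phi_i$ with K-semistable central fiber is encoded by a divisorial valuation $v_i$ on $X$ satisfying $A_{X,\Delta}(v_i)=S(v_i)$ (with $S$ computed against $-K_X-\Delta$). Fix $r\gg 0$ so that $L:=-r(K_X+\Delta)$ is Cartier and very ample, set $R=\bigoplus_m H^0(X,mL)$, and let $\cF_i$ be the $\bN$-filtration of $R$ induced by $v_i$. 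Form the bifiltration $\cF^{(\lambda,\mu)}R_m:=\cF_1^\lambda R_m\cap \cF_2^\mu R_m$ and the double Rees algebra
\[
\mathfrak{R}:=\bigoplus_{m\ge 0}\bigoplus_{\lambda,\mu\in\bZ} t_1^{-\lambda}t_2^{-\mu}\,\cF^{(\lambda,\mu)}R_m \;\subset\; R[t_1^{\pm 1},t_2^{\pm 1}],
\]
with the convention $\cF_j^\alpha R_m=R_m$ for $\alpha\le 0$. Set $\fX:=\mathrm{Proj}_{\bA^2}\mathfrak{R}$ with the natural $\psi:\fX\to\bA^2=\Spec \bC[t_1,t_2]$, and let $\fD$ be the flat closure of $\Delta\times\bG_m^2$.

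Once this is set up, conditions (2) and (3) are immediate from standard Rees-algebra yoga: the $\bG_m^2$-action comes from the bigrading, restricting $\mathfrak{R}$ over $\bG_m^2\subset\bA^2$ trivializes the construction (summing over $\lambda,\mu$ collapses to $R[t_1^{\pm 1},t_2^{\pm 1}]$), and restricting over $\bA^1\times\{1\}$ (resp.\ $\{1\}\times\bA^1$) collapses the second (resp.\ first) variable leaving the single-filtration Rees algebra of $\cF_1$ (resp.\ $\cF_2$), which recovers $\cX_1$ (resp.\ $\cX_2$). Finite generation of $\mathfrak{R}$ can be seen by passing to a common log resolution on which both $v_i$ are orders along prime divisors, so the graded pieces become global sections of a $\bZ^2$-indexed system of line bundles on a single projective birational model. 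Flatness of $\psi$ and the $\bQ$-Cartierness and $\psi$-ampleness of $-(K_\fX+\fD)$ are then forced by the very ampleness of $L$. The klt property on each fiber will follow by inversion of adjunction applied to the plt pairs $(\cX_i,\cX_{i,0}+\cD_i)$ and to the further induced special test configurations that appear along the axes.

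The main obstacle is showing that \emph{every} fiber $(\fX_t,\fD_t)$ is K-semistable. By $\bG_m^2$-equivariance, the only orbit of fibers not covered by the hypothesis is the central one over $(0,0)$. I would handle this by viewing the restriction of $\psi$ over $\{0\}\times\bA^1$ as a special test configuration of $(\cX_{1,0},\cD_{1,0})$ induced by the divisorial valuation $\bar v_2$ on $\cX_{1,0}$ arising as the limit of $v_2$ under $\phi_1$; the central fiber of this induced test configuration is precisely $\fX_{(0,0)}$. By \cite[Lemma 3.1]{LWX18}, K-semistability of this central fiber reduces to the equality $A_{\cX_{1,0},\cD_{1,0}}(\bar v_2)=S(\bar v_2)$ (computed with respect to $-K_{\cX_{1,0}}-\cD_{1,0}$). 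To secure this equality I would chain four inputs: (i) invariance of the $S$-invariant under the degeneration $\phi_1$, giving $S_{\cX_{1,0}}(\bar v_2)=S_X(v_2)$; (ii) lower semicontinuity of log discrepancy in the $\bG_m$-equivariant family $\cX_1\to\bA^1$, giving $A_{\cX_{1,0}}(\bar v_2)\le A_X(v_2)$; (iii) K-semistability of $(\cX_{1,0},\cD_{1,0})$, giving $A_{\cX_{1,0}}(\bar v_2)\ge S_{\cX_{1,0}}(\bar v_2)$; and (iv) the hypothesis $A_X(v_2)=S_X(v_2)$. Chaining forces equality throughout and produces the desired K-semistability of $\fX_{(0,0)}$, completing the proof.
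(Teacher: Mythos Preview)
The paper's own proof is a one-line deferral to \cite[Theorem 3.2]{LWX18}; the substantive work is done there, via MMP. Your proposal attempts instead to construct the family directly as $\mathrm{Proj}$ of a double Rees algebra, which is morally the same object, but the argument has a real gap at the key step.

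The main problem is finite generation of $\mathfrak{R}$. Your justification---pass to a common log resolution so that the bifiltered pieces become $H^0(Y,m\pi^*L-\lambda E_1-\mu E_2)$---does not establish finite generation: this is a Cox-type ring for two divisors on a birational model, and such rings are not finitely generated in general. For a \emph{single} divisorial valuation $v$ with $A_{X,\Delta}(v)=S(v)$, finite generation of the Rees algebra is already a nontrivial theorem (it is exactly the construction of the special test configuration in \cite{BX-separatedness}, ultimately relying on \cite{LX14} and MMP). For two such valuations simultaneously, the finite generation of the double Rees algebra is precisely the content of \cite[Theorem 3.2]{LWX18}: one starts from $\cX_1\times\bA^1$ over $\bA^2$, extends $v_2$ to this family, and runs a relative MMP to produce the new model. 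Termination of that MMP is what gives finite generation and, simultaneously, the plt/special structure along the axes. Your sentence about finite generation skips exactly the hard part.

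There is also some circularity downstream. You assert that the klt property of the fibers ``will follow by inversion of adjunction applied to the plt pairs $(\cX_i,\cX_{i,0}+\cD_i)$ and to the further induced special test configurations that appear along the axes''---but to know those induced families along $\{0\}\times\bA^1$ are \emph{special} test configurations you already need the central fiber $(\fX_{(0,0)},\fD_{(0,0)})$ to be klt log Fano, which is what you are trying to prove. Likewise, your chain (i)--(iv) for K-semistability of $\fX_{(0,0)}$ presupposes that $\{0\}\times\bA^1$ carries a special test configuration of $(\cX_{1,0},\cD_{1,0})$; and step (i), invariance of $S$ under degeneration of the valuation, needs an argument that the filtration $\cF_{v_2}$ actually specializes to $\cF_{\bar v_2}$ on the central fiber. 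These points are handled in \cite{LWX18} as byproducts of the MMP construction, not as separate lemmas layered on top of a formal Rees construction.
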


\begin{proof}
This is a more precise version of \cite[Theorem 3.2]{LWX18} and essentially follows from the proof of \cite[Theorem 3.2]{LWX18}.
\end{proof}

We will also use the following result from \cite{Li-singular-YTD}.

\begin{lem} \label{lem:twist valuation compute delta}
Let $\bT$ be a torus and let $(X,\Delta)$ be a K-semistable log Fano pair with a $\bT$-action. Let $v\in\Val_X$ be a $\bT$-invariant valuation such that $A_{X,\Delta}(v)=S(v)$. Then we have $A_{X,\Delta}(v_\xi)=S(v_\xi)$ for all $\xi\in N(\bT)_\bR$ such that $v\neq \wt_{-\xi}$.
\end{lem}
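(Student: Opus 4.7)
The plan is to show that, on the set of $\xi\in N(\bT)_\bR$ for which $v_\xi$ is a nontrivial valuation, both $A_{X,\Delta}(v_\xi)$ and $S(v_\xi)$ are affine linear in $\xi$; K-semistability of $(X,\Delta)$, combined with the hypothesis that $\varphi(0)=0$ where $\varphi(\xi):=A_{X,\Delta}(v_\xi)-S(v_\xi)$, then forces $\varphi\equiv 0$.

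For $S(v_\xi)$, I would exploit the $\bT$-invariance of $\cF_v$: the weight decomposition $R_m=\bigoplus_\mu (R_m)_\mu$ of $R_m:=H^0(X,-m(K_X+\Delta))$ refines $\cF_v$, so one can choose an $m$-basis type divisor compatible with both $\cF_v$ and the weight grading. On such a basis $\{s_i\}$ with $s_i$ of weight $\mu_i$ we have $v_\xi(s_i)=v(s_i)+\langle \mu_i,\xi\rangle$, hence
\[
S_m(v_\xi)=S_m(v)+\frac{1}{mN_m}\sum_{i=1}^{N_m}\langle\mu_i,\xi\rangle.
\]
The average weight $\tfrac{1}{mN_m}\sum_i\mu_i$ converges as $m\to\infty$ to the barycenter $\bar\mu\in M(\bT)_\bR$ of the Duistermaat--Heckman measure of the $\bT$-action on $R_\bullet$; thus $S(v_\xi)=S(v)+\langle\bar\mu,\xi\rangle$, which is affine linear in $\xi$.

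For $A_{X,\Delta}(v_\xi)$, after the standard approximation one may assume $v=c\cdot\ord_E$ for a $\bT$-invariant prime divisor $E$ on some $\bT$-equivariant log resolution. By Lemma \ref{lem:T-valuation}(2), $\xi\mapsto v_\xi$ parameterizes the $\bT$-invariant valuations sharing the same restriction to $k(X)^\bT$, and a local toric analysis at the generic point of the center identifies this family with a translate of $N(\bT)_\bR$ whose log discrepancy function is integrally affine. Thus
\[
A_{X,\Delta}(v_\xi)=A_{X,\Delta}(v)+\langle\lambda_v,\xi\rangle
\]
for some $\lambda_v\in M(\bT)_\bR$, at least for $\xi$ in a neighborhood of $0$ on which a single $\bT$-equivariant birational model realizes $v_\xi$ as a divisorial valuation.

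Combining the two computations, $\varphi(\xi)=\langle\lambda_v-\bar\mu,\xi\rangle$ on a neighborhood of $0\in N(\bT)_\bR$ (using $\varphi(0)=0$). By Theorem \ref{thm:delta as inf} and K-semistability, $\varphi(\xi)\ge 0$ whenever $v_\xi\in\Val_X$ is nontrivial, and in particular on a neighborhood of $0$. A linear function that is nonnegative on an open neighborhood of $0$ must be identically zero, so $\lambda_v=\bar\mu$ and $\varphi\equiv 0$ wherever the affine formulas apply; since those formulas are intrinsic to the valuation $v_\xi$, this yields $A_{X,\Delta}(v_\xi)=S(v_\xi)$ for every $\xi\in N(\bT)_\bR$ with $v\neq\wt_{-\xi}$. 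The main obstacle is Step 2, the linearity of the log discrepancy under twisting; the center of $v_\xi$ may vary with $\xi$, so a uniform $\bT$-equivariant model must be fixed near $\xi=0$ and the toric-fan structure on $\bT$-invariant divisorial valuations must be extracted at the generic point of the center of $v$.
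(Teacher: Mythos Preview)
Your strategy is correct and is essentially the content of \cite{Li-singular-YTD}*{Proposition 3.12}, which the paper simply cites. That result says precisely that $\beta(v_\xi)=\beta(v)+\Fut_{(X,\Delta)}(\xi)$ (in appropriate normalization), and the paper then uses that K-semistability forces $\Fut_{(X,\Delta)}(\xi)=0$. Your argument ``$\varphi$ is linear, $\varphi(0)=0$, and $\varphi\ge 0$ by K-semistability, hence $\varphi\equiv 0$'' is exactly the same reasoning, phrased without isolating the Futaki term: the linear functional $\lambda_v-\bar\mu$ you produce \emph{is} the Futaki character.

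Your computation of $S(v_\xi)$ via the weight decomposition is correct. The genuine gap is, as you acknowledge, Step~2: you have not proved the affine linearity of $\xi\mapsto A_{X,\Delta}(v_\xi)$. The phrase ``standard approximation'' to reduce to divisorial $v$ is not justified (there is no approximation procedure that obviously preserves the identity $A=S$), and the ``local toric analysis'' is only a pointer, not an argument. What is actually needed is the statement that on a $\bT$-equivariant birational model $Y\to X$ the family $\{v_\xi\}$ is realized inside a single toric fan at the generic point of the center, on which $A_{X,\Delta}$ is the restriction of a linear function; this is precisely what Li establishes and what the paper outsources. Once Step~2 is granted your passage from a neighborhood of $0$ to all of $N(\bT)_\bR$ is fine (locally affine on a connected open subset of $\bR^r$ implies globally affine), and the finiteness of $A_{X,\Delta}(v_\xi)$ needed to invoke Theorem~\ref{thm:delta as inf} follows from the affine formula itself.
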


\begin{proof}
This is a direct consequence of \cite{Li-singular-YTD}*{Proposition 3.12} as $\Fut_{(X,\Delta)}(\xi)=0$ by the K-semistability of $(X,\Delta)$. 
\end{proof}

\subsection{Basis type filtrations}  \label{sec:prelim-basis-type}

Let $V$ be a finite dimensional vector space. 

\begin{defn}
A basis type filtration $\cF$ of $V$ is an $\bN$-filtration
\[
V=\cF^0 V\supseteq \cF^1 V \supseteq \cdots \supseteq \cF^N V = 0
\]
such that $\dim \Gr_\cF^i V = 1$ for all $0\le i\le N-1$ (in particular, $N=\dim V$).
\end{defn}

In the actual application, we will always take $V=H^0(X,L)$ for some line bundle $L$ on a projective variety $X$. The following construction of basis type filtrations are particularly important for us.

\begin{expl} \label{expl:basic construction}
Let $V=H^0(X,L)$ as above and let $N=\dim V$. We construct a basis type filtration $\cF$ of $V$ as follows. Let $\cF^0 V=V$. Suppose that $\cF^i V$ has been constructed, we view it as a linear series and write
\[
|\cF^i V| = |M_i| + F_i
\]
where $F_i$ is the fixed part and $M_i$ is the movable part. Choose a smooth point $x_{i+1}\in X$ that's not a base point of $|M_i|$, then evaluating at $x_{i+1}$ gives a surjective map $M_i\rightarrow M_i\otimes k(x_{i+1})$ and we denote its kernel by $M_i\otimes \fm_{x_{i+1}}$ (it consists of those elements of $M_i$ that vanishes at $x_{i+1}$). We then define $\cF^{i+1} V$ by the formula $|\cF^{i+1} V| = |M_i\otimes \fm_{x_{i+1}}| + F_i$. It is clear that $\cF^{i+1} V$ has codimension $1$ in $\cF^i V$. The construction of the filtration then proceeds inductively. We call the resulting filtration the basis type filtration associated to the prescribed base points $x_1,\cdots,x_N$.
\end{expl}

We will mainly use two special cases of the above construction.

\begin{expl}
The construction clearly works if $x_1,\cdots,x_N$ are distinct general points on $X$, in which case the associated basis type filtration $\cF$ of $V$ is said to be of type (I). 
\end{expl}

\begin{expl}
As a variant, let $\pi:Y\rightarrow X$ be a proper birational morphism and let $E$ be a divisor on $Y$. Recall that we have a filtration $\cF_E$ on $V=H^0(Y,\pi^*L)=H^0(X,L)$ given by $\cF_E^i V=H^0(Y,\pi^*L-iE)$. In the construction in Example \ref{expl:basic construction}, since the $M_i$'s are movable, we may choose $x_1,\cdots,x_N$ to be distinct general points on $E$ and it is not hard to see that the associated basis type filtration $\cF$ is a refinement of $\cF_E$. We call it a basis type filtration of type (II) associated to the divisor $E$.
\end{expl}

We note the following elementary property of basis type filtration.

\begin{lem} \label{lem:filtration property}
Let $V$ be a vector space of dimension $N$. Let $\cF, \cG$ be two $\bN$-filtrations on $V$ where $\cF$ is of basis type. Let $i\in\bN$ and let \[A_i=\{j\in\bN \,|\,\dim \Gr_\cF^j \Gr_\cG^i V = 1 \}.\]
Then $|A_i|=\dim \Gr_\cG^i V$ and $\cup_{i=0}^{\infty} A_i$ gives a partition of $\{0,1,\cdots,N-1\}$.
\end{lem}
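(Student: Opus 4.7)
The plan is to reduce the statement to a simple bookkeeping of dimensions attached to the double filtration. First I would set $h^{j,i}=\dim(\cF^j V\cap \cG^i V)$, so that $h^{0,0}=N$, both $j\mapsto h^{j,i}$ and $i\mapsto h^{j,i}$ are nonincreasing, and $h^{j,i}=0$ once $j$ or $i$ is large enough. Next I would identify the filtration that $\cF$ induces on the quotient $\Gr_\cG^i V$: the image of $\cF^j V\cap \cG^i V$ in $\Gr_\cG^i V=\cG^i V/\cG^{i+1} V$ has kernel $\cF^j V\cap \cG^{i+1} V$, so this induced subspace has dimension $h^{j,i}-h^{j,i+1}$. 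Subtracting the analogous expression for $\cF^{j+1}$ yields the key identity
\[
\dim \Gr_\cF^j \Gr_\cG^i V \;=\; h^{j,i}-h^{j+1,i}-h^{j,i+1}+h^{j+1,i+1}.
\]

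Both halves of the conclusion then fall out of telescoping this formula. Summing the right-hand side over $i\in\bN$ collapses to $h^{j,0}-h^{j+1,0}=\dim \Gr_\cF^j V$, and summing over $j\in\bN$ collapses to $h^{0,i}-h^{0,i+1}=\dim \Gr_\cG^i V$. Now I would invoke the basis-type hypothesis: $\dim \Gr_\cF^j V$ equals $1$ for $j\in\{0,\dots,N-1\}$ and $0$ otherwise. Since each $\dim \Gr_\cF^j \Gr_\cG^i V$ is a nonnegative integer, the first telescoping forces each such dimension to lie in $\{0,1\}$ and, for every fixed $j\in\{0,\dots,N-1\}$, to equal $1$ for exactly one value of $i\in\bN$ (while for $j\ge N$ no $i$ works). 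This gives the disjointness of the $A_i$'s and shows $\bigcup_i A_i=\{0,1,\dots,N-1\}$, i.e.\ a partition. The second telescoping then reads $|A_i|=\sum_j \dim \Gr_\cF^j \Gr_\cG^i V=\dim \Gr_\cG^i V$, finishing the proof.

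The only nontrivial step is the identity for $\dim \Gr_\cF^j \Gr_\cG^i V$ in terms of the $h^{j,i}$; after that the argument is purely combinatorial, so I do not anticipate any serious obstacle. The one place requiring care is verifying that the $\cF$-filtration induced on the quotient $\Gr_\cG^i V$ is really given by the images of $\cF^j V\cap \cG^i V$, but this uses nothing beyond the definitions together with $\cG^{i+1} V\subseteq \cG^i V$.
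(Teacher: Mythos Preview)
Your proposal is correct and follows essentially the same path as the paper. The paper encodes your dimension identity as the isomorphism
\[
\Gr_\cF^j \Gr_\cG^i V \;\cong\; (\cF^j V\cap \cG^i V)\big/(\cF^{j+1}V\cap \cG^i V + \cF^j V\cap \cG^{i+1}V)\;\cong\; \Gr_\cG^i \Gr_\cF^j V,
\]
whose symmetry in $(\cF,\cG)$ is exactly the symmetry of your formula $h^{j,i}-h^{j+1,i}-h^{j,i+1}+h^{j+1,i+1}$; from there the paper reads off the same two facts you obtain by telescoping.
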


\begin{proof}
Since $\cF$ is of basis type, the induced filtration on $\Gr_\cG^i V$ satisfies $\dim \Gr_\cF^j \Gr_\cG^i V \le 1$ for all $j\in\bN$, thus $|A_i|=\dim \Gr_\cG^i V$. It is not hard to check that
\[
\Gr_\cF^j \Gr_\cG^i V \cong (\cF^j V\cap \cG^i V) / (\cF^{j+1}V\cap \cG^i V + \cF^j V\cap \cG^{i+1}V) \cong \Gr_\cG^i \Gr_\cF^j V.
\]
Since $\dim \Gr_\cF^j V = 1$ for all $0\le j\le N-1$, for any such $j$ there exists a unique $i\in\bN$ such that $\dim \Gr_\cG^i \Gr_\cF^j V = 1$. By the above equality, this implies that the $A_i$'s give a partition of $\{0,1,\cdots,N-1\}$.
\end{proof}

\subsection{Deformations of product of Fano varieties} \label{sec:deform product}

The results in this section are probably well known but we cannot find a suitable reference (but see \cite{Li-deform-product}).

\begin{lem} \label{lem:nef in nbd}
Let $(\cX,\cD)$ be a klt pair and $f:\cX\to B$ a flat projective fibration to a smooth variety, and let $0\in B$ be a point such that the fiber $\cX_0$ is a normal variety not contained in the support of $D$. Let $\cL$ be a $\bQ$-Cartier $\bQ$-divisor on $\cX$ such that $\cL|_{\cX_0}$ is nef and $(a\cL - K_{\cX/B}-\cD)|_{\cX_0}$ is nef and big for some $a\ge 0$. Then $\cL|_{\cX_b}$ is nef for all $b$ in a Zariski neighbourhood of $0\in B$.
\end{lem}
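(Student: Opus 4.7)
The plan is to produce a positive integer $m$ and a Zariski neighborhood $U$ of $0\in B$ such that $m\cL$ is Cartier and base-point-free on $\cX_U:=f^{-1}(U)$. Once this is established, $m\cL|_{\cX_b}$ is globally generated, hence nef, for every $b\in U$, so $\cL|_{\cX_b}$ is nef as well. The two ingredients are the Kawamata--Shokurov base-point-free theorem applied on the central fiber $\cX_0$ and cohomology-and-base-change used to spread the resulting sections to a neighborhood of $\cX_0$ in $\cX$.

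First I would note that $\cX_0$ is a Cartier divisor on $\cX$ (since $B$ is smooth and $f$ is flat) that is not contained in $\Supp\cD$, so by inversion of adjunction applied to $(\cX,\cX_0+\cD)$ the restricted pair $(\cX_0,\cD|_{\cX_0})$ is klt. Adjunction gives $(K_{\cX/B}+\cD)|_{\cX_0}=K_{\cX_0}+\cD|_{\cX_0}$, so the hypotheses translate to $\cL|_{\cX_0}$ being nef and $a\cL|_{\cX_0}-K_{\cX_0}-\cD|_{\cX_0}$ being nef and big. The base-point-free theorem then yields that $\cL|_{\cX_0}$ is semi-ample, so we may pick $m>0$ sufficiently divisible with $m\ge a$, $m\cL$ Cartier, and $|m\cL|_{\cX_0}|$ base-point-free. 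Kawamata--Viehweg vanishing on $(\cX_0,\cD|_{\cX_0})$ applied to the nef-and-big class $m\cL|_{\cX_0}-K_{\cX_0}-\cD|_{\cX_0}$ gives $H^i(\cX_0,m\cL|_{\cX_0})=0$ for all $i>0$. By cohomology and base change, after shrinking $B$ around $0$ the sheaf $f_*\cO_\cX(m\cL)$ is locally free near $0$ and the natural map $f_*\cO_\cX(m\cL)\otimes k(0)\to H^0(\cX_0,m\cL|_{\cX_0})$ is an isomorphism. Lifting a generating set $s_1,\dots,s_N$ of $H^0(\cX_0,m\cL|_{\cX_0})$ to sections $\tilde s_1,\dots,\tilde s_N$ of $m\cL$ on $\cX_U$ for some neighborhood $U$ of $0$, the common vanishing locus $Z=\bigcap_i\{\tilde s_i=0\}$ is closed in $\cX_U$ and disjoint from $\cX_0$. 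Since $f$ is proper, $f(Z)$ is closed in $U$ and omits $0$, so after further shrinking $U$ we have $Z=\emptyset$, and $m\cL$ is base-point-free on $\cX_U$.

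The most delicate step is verifying that $(\cX_0,\cD|_{\cX_0})$ is klt: $(\cX,\cD)$ klt alone does not guarantee this (for example, restricting $(\bA^2, c\cdot\{y=x^2\})$ to $\{y=0\}$ for $c\in(\tfrac{1}{2},1)$ fails), and one really needs $(\cX,\cX_0+\cD)$ to be plt in a neighborhood of $\cX_0$. In the intended applications within Section \ref{sec:deform product}, this should follow automatically from the geometry (e.g.\ when $\cX_0$ is a product of klt log Fano pairs and $\cD|_{\cX_0}$ is its natural boundary), but it should be checked in each use. The remaining steps---the base-point-free theorem, Kawamata--Viehweg vanishing, and cohomology and base change---are standard once the klt property on $\cX_0$ is secured.
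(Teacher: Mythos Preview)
Your argument is clean but, as you yourself flag in the final paragraph, it does not prove the lemma as stated: running the base-point-free theorem and Kawamata--Viehweg vanishing on $\cX_0$ requires $(\cX_0,\cD|_{\cX_0})$ to be klt, and the hypotheses only give that $(\cX,\cD)$ is klt. Your counterexample shows this is a genuine obstruction, not a formality. It is true that in the application (Lemma~\ref{lem:deform Fano product}) the central fiber is log Fano by assumption, so your approach suffices there; but what you have written is a proof of a weaker lemma with an added hypothesis, not of the lemma itself.

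The paper's proof takes a genuinely different route that avoids this issue. Rather than working on $\cX_0$, it first uses that $\cL|_{\cX_b}$ is nef for \emph{very general} $b$ (a standard semicontinuity fact), and for such $b$ the pair $(\cX_b,\cD_b)$ \emph{is} automatically klt, since a general fiber of a klt total space is klt. One then applies Koll\'ar's \emph{effective} base-point-free theorem on these fibers to obtain a single integer $m$, depending only on $a$ and the fiber dimension, such that $|m\cL|_{\cX_b}|$ is free for every very general $b$. This spreads to freeness of $m\cL$ over a Zariski open $U\subseteq B$; if $0\in U$ we are done, and otherwise one restricts to a resolution of the components of $B\setminus U$ and continues by Noetherian induction. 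The uniformity of $m$ is exactly what makes the induction close up. So your approach trades generality for directness, while the paper's argument is more roundabout but proves the statement as written.
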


\begin{proof}
The proof is essentially the same as \cite[Lemma 3.9]{dFH-deform-Fano}. Replacing $\cL$ by $m\cL$ for some sufficiently divisible $m$, we may assume that $\cL$ is Cartier. By \cite[Lemma 3.9]{dFH-deform-Fano} (applied to the divisor $\cL'=2a\cL - K_{\cX/B}-\cD$), we may also assume that $(2a\cL - K_{\cX/B}-\cD)|_{\cX_b}$ is nef and big for all $b$ after shrinking $B$. By \cite[Proposition 1.4.14]{Laz-positivity-1}, $\cL|_{\cX_b}$ is nef if $b\in B$ is very general. This means there is a set $W\subseteq B$ that is the complement of countably many subvarieties such that $\cL|_{\cX_b}$ is nef when $b\in W$. By \cite[Theorem 1.1]{Kol-eff-bpf}, there exists a positive integer $m$ (depending only on $a$ and $n=\dim \cX_0$) such that $|m\cL|_{\cX_b}|$ is base point free for all $b\in W$. Further shrinking $B$ and $W$ if necessary, we may assume that the restriction map $H^0(\cX,m\cL)\to H^0(\cX_b,m\cL|_{\cX_b})$ is surjective for all $b\in W$. But since $|m\cL|_{\cX_b}|$ is base point free, we conclude that $\Bs(m\cL)\cap \cX_b=\emptyset$ for $b\in W$. As $f$ is proper, it follows that there exists an open set $U\subseteq B$ containing $W$ so that $\Bs(m\cL)\cap \cX_b=\emptyset$ whenever $b\in U$ and hence $|m\cL|_{\cX_b}|$ is base point free when $b\in U$. In particular, $\cL|_{\cX_b}$ is nef over $U$. We are done if $0\in U$; otherwise, replace $B$ by a resolution of the components of $B\backslash U$ and use Noetherian induction.
\end{proof}

\begin{lem} \label{lem:local topology}
Let $f:\cX\to B$ be a projective morphism, let $b\in B$ and let $X_b={\rm red}\,f^{-1}(b)$ be the reduced fiber over $b$. Then there exists an analytic neighbourhood $U\subseteq B$ of $b$ such that the natural map $H^i(\cX_U,\bZ)\to H^i(X_b,\bZ)$ $($where $\cX_U=\cX\times_B U)$ is an isomorphism for all $i\in \bN$.
\end{lem}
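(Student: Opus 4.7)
The plan is as follows. First observe that $\cX_b$ (viewed scheme-theoretically) and $X_b = \mathrm{red}\,f^{-1}(b)$ share the same underlying topological space, differing only in the structure sheaf. Since singular cohomology with $\bZ$-coefficients depends only on the underlying topological space, $H^i(\cX_b,\bZ)=H^i(X_b,\bZ)$ for every $i\in \bN$. Hence it suffices to exhibit an analytic neighbourhood $U$ of $b$ such that the inclusion $X_b \hookrightarrow \cX_U$ induces an isomorphism on singular cohomology; in fact it is enough to exhibit $X_b$ as a deformation retract of $\cX_U$.

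The key tool is the stratification theory for proper analytic morphisms. After shrinking $B$ to an analytic neighbourhood of $b$, one chooses compatible Whitney stratifications of $\cX$ and $B$ such that $f$ restricted to each stratum of $\cX$ is a submersion onto a stratum of $B$. The existence of such stratifications for proper holomorphic morphisms is classical (Whitney--Thom--Mather; see e.g.\ Goresky--MacPherson, \emph{Stratified Morse Theory}, Part I). Because $f$ is proper, Thom's first isotopy lemma then produces a fundamental system of open neighbourhoods $U\subseteq B$ of $b$ together with stratified homeomorphisms $\cX_U \cong X_b\times U$ compatible with the projection to $U$. For any such $U$, $X_b$ is a deformation retract of $\cX_U$, so $H^i(\cX_U,\bZ)\stackrel{\sim}{\to} H^i(X_b,\bZ)$ is an isomorphism for every $i$, as required.

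The main content of the argument is the appeal to Thom's first isotopy lemma via compatible Whitney stratifications; once this classical machinery is granted, the conclusion is immediate. An alternative route, which I would consider only as a sanity check, is to combine analytic proper base change $(R^i f_*\bZ_\cX)_b \cong H^i(\cX_b,\bZ)$ with constructibility of $R^i f_*\bZ_\cX$, so that the stalk at $b$ is already realized by $H^i(\cX_U,\bZ)$ for some sufficiently small $U$. However, constructibility itself rests on the same stratification theory, so this is not really a shortcut; the hard input is in all cases the existence of a Thom--Mather stratification adapted to $f$.
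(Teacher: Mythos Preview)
Your main argument has a genuine gap: the claimed stratified homeomorphism $\cX_U \cong X_b\times U$ over a full open neighbourhood $U\subseteq B$ of $b$ is false in general. Thom's first isotopy lemma requires that each stratum of the source submerse onto the target manifold; with your stratification this only holds over the stratum $S\subseteq B$ containing $b$, so the lemma yields local triviality over $S$, not over all of $B$. Concretely, take the family of plane conics $\{xy=tz^2\}\subseteq \bP^2\times\bA^1\to\bA^1$: the generic fibre is a smooth conic ($\chi=2$) while the special fibre at $t=0$ is a pair of lines ($\chi=3$), so no neighbourhood $\cX_U$ can be homeomorphic to $X_0\times U$.

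What \emph{is} true, and what the paper proves, is only that $\cX_U$ deformation retracts onto $X_b$ for small $U$. The paper obtains this from the triangulation theorems of \L{}ojasiewicz and Hironaka: one makes $f$ simplicial so that $X_b$ is a subcomplex, and then the preimage of an open star of $b$ retracts onto $X_b$. The stratification route can be repaired, but it needs more than the first isotopy lemma: one has to invoke the Thom $a_f$ condition together with the local conical structure coming from Thom--Mather control data to build the retraction. Finally, the alternative you sketch---proper base change combined with constructibility of $Rf_*\bZ_\cX$ to identify $H^i(\cX_U,\bZ)$ with the stalk for small $U$---is in fact a correct and self-contained argument, so you should not dismiss it; it avoids the overreach in your main line.
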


\begin{proof}
By choosing a triangulation of $\cX$ and $B$ such that $X_b$ is a sub-complex and $f$ is a map between CW complexes (see e.g. \cite{Loj-triangulation,Hir-triangulation}), we see that there exists an analytic neighbourhood $U\subseteq B$ of $b$ such that $\cX_U$ deformation retracts to $X_b$. Therefore, the maps $H^i(\cX_U,\bZ)\to H^i(X_b,\bZ)$ are isomorphisms.
\end{proof}

\begin{lem} \label{lem:deform Fano product}
Let $X_i$ $(i=1,2)$ be normal projective varieties and let $X=X_1\times X_2$. Let $\Delta$ be an effective $\bQ$-divisor on $X$ such that $(X,\Delta)$ is log Fano. Let $(\cX,\cD)$ be a pair and let $\phi:\cX\to B$ be a flat projective fibration onto a smooth variety $B$ such that the support of $\cD$ does not contain any fiber of $\phi$. Let $0\in B$ and assume that $(\cX_0,\cD_0)=(\phi^{-1}(0),\cD|_{\cX_0})$ is isomorphic to $(X,\Delta)$. Then
    \begin{enumerate}
        \item There exists an open set $($in the analytic topology$)$ $0\in U\subseteq B$ and two projective morphisms $\cX_i\to U$ $(i=1,2)$ with central fibers $X_i$ such that $\cX\times_B U \cong \cX_1\times_U \cX_2$ over $U$. Moreover, both $\cX_i$ $(i=1,2)$ are uniquely determined by $\phi$ and $U$.
        \item If $B=\bA^r$ and $(\cX,\cD)$ admits a $\bG_m^r$-action such that $\phi:\cX\to \bA^r$ is $\bG_m^r$-equivariant, then one can take $U=\bA^r$ in $(1)$ and moreover, the factors $\cX_i$ also admit $\bG_m^r$-actions making the isomorphism $\cX \cong \cX_1\times_{\bA^r} \cX_2$ equivariant.
    \end{enumerate}
\end{lem}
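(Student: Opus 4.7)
The plan is to construct the factors $\cX_i \to U$ as the relative Iitaka fibrations of line bundles on $\cX_U$ extending $\pi_i^*L_i$, where $L_i$ is an ample line bundle on $X_i$.

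I would first record the cohomological inputs. Restricting $(X,\Delta)$ to a general fiber of $\pi_i$ (isomorphic to $X_{3-i}$) exhibits $X_{3-i}$ as klt with an effective boundary making it log Fano, so by Kawamata--Viehweg vanishing, $H^j(X_i,\cO_{X_i}) = 0$ for $j>0$. K\"unneth (valid thanks to rational singularities) combined with Serre vanishing then gives $H^j(X,\pi_i^*(mL_i)) = H^j(X_i,mL_i) = 0$ for $j > 0$ and $m \gg 0$; in particular $H^1(\cX_0,\cO_{\cX_0}) = H^2(\cX_0,\cO_{\cX_0}) = 0$.

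The key step is the extension. Since $H^1$ and $H^2$ of $\cO_{\cX_0}$ vanish, the relative Picard space $\underline{\mathrm{Pic}}(\cX/B) \to B$ is \'etale at $[\pi_1^*L_1]$ (smoothness from $H^2 = 0$, tangent-space triviality of the fiber from $H^1=0$), yielding a unique section over an analytic neighborhood $U$ of $0 \in B$ — equivalently, a line bundle $\cL_1$ on $\cX_U$ with $\cL_1|_{\cX_0} = \pi_1^*L_1$. Lemma \ref{lem:nef in nbd} applied with $a = 0$ then ensures $\cL_1|_{\cX_b}$ is nef for $b$ in a Zariski neighborhood of $0$, which I absorb into $U$. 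Combining Koll\'ar's effective base-point-freeness with cohomology and base change (using the vanishings above), for fixed $m \gg 0$ the sheaves $\phi_*(km\cL_1)$ are locally free on $U$ with formation commuting with base change for all $k \geq 1$. Setting $\cX_1 := \mathrm{Proj}_U \bigoplus_{k \geq 0} \phi_*(km\cL_1)$ gives a projective family over $U$ whose fiber over $0$ is $\mathrm{Proj}\bigoplus_k H^0(X_1,kmL_1) = X_1$, and similarly one constructs $\cX_2$. The induced morphism $f \colon \cX_U \to \cX_1 \times_U \cX_2$ is the identity on the central fiber; since both sides are proper and flat over $U$, Nakayama applied to $\cO_{\cY} \to f_*\cO_{\cX_U}$ combined with upper semi-continuity of fiber dimension forces the iso locus to be open, so after further shrinking $U$ this morphism is an isomorphism. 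Uniqueness of $\cX_i$ follows from uniqueness of the \'etale lift.

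For (2), the $\bG_m^r$-action on $(\cX,\cD)$ fixes the point $[\pi_1^*L_1] \in \underline{\mathrm{Pic}}(\cX/\bA^r)$, because $\bG_m^r$ is connected and $\mathrm{Pic}^0(\cX_0) = 0$ (by $H^1(\cX_0,\cO_{\cX_0}) = 0$); uniqueness of the \'etale section then makes $\cL_i$, hence $\cX_i$ and the factorization, canonically $\bG_m^r$-equivariant. The neighborhood $U$ may be enlarged $\bG_m^r$-equivariantly, and any $\bG_m^r$-invariant analytic open subset of $\bA^r$ containing $0$ equals $\bA^r$ (the $\bG_m^r$-orbit of any nonzero point has $0$ in its closure), forcing $U = \bA^r$. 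The hard part is ensuring that all of the local constructions — the analytic Picard extension, Lemma \ref{lem:nef in nbd}, cohomology and base change, and the iso $\cX_U \cong \cX_1 \times_U \cX_2$ — simultaneously hold on a single open set $U$, and in the equivariant setting that this $U$ can be chosen $\bG_m^r$-invariantly so as to exhaust $\bA^r$.
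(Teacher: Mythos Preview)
Your strategy matches the paper's: extend $\pi_i^* L_i$ to a line bundle $\cL_i$ on the family, show it is relatively semiample, and recover $\cX_i$ from the induced contraction. The substantive difference is in how the extension is obtained. You use that $\underline{\Pic}(\cX/B)\to B$ is \'etale at $[\pi_i^*L_i]$ (from $H^1=H^2=0$ of $\cO_{\cX_0}$), while the paper uses the exponential sequence to identify $\Pic\cong H^2(-,\bZ)$ on both $\cX$ and $\cX_0$, and then Lemma~\ref{lem:local topology} to match $H^2(\cX_U,\bZ)\cong H^2(\cX_0,\bZ)$. Both routes are valid.

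Two places where the paper's version is tighter. For (2), the paper observes that the diagonal one-parameter subgroup $t\mapsto(t,\dots,t)$ already gives a deformation retraction of $\cX$ onto $\cX_0$, so $\Pic(\cX)\cong\Pic(\cX_0)$ over all of $\bA^r$ without any shrinking; the extension $\cL_i$ is then global and $\bG_m^r$-invariant by uniqueness, and the rest of the argument runs over $\bA^r$ directly. Your ``enlarge $U$ equivariantly'' step can be made to work (translate the product structure by $\bG_m^r$ and glue on overlaps via the uniqueness you prove), but this gluing should be spelled out rather than asserted. Second, your uniqueness argument is incomplete: uniqueness of the \'etale lift gives uniqueness of $\cL_i$ on $\cX_U$, but not immediately uniqueness of an arbitrary factor $\cX_i'$ in a given decomposition $\cX_U\cong\cX_1'\times_U\cX_2'$. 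The paper closes this by showing (via K\"unneth and the same vanishing) that $\Pic(\cX_i')\cong\Pic(X_i)$, so $M_i$ lifts uniquely to $\cX_i'$ and its pullback to $\cX_U$ must agree with $\cL_i$; hence $\cX_i'$ is recovered as the image of $\psi_{|m\cL_i|}$.
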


\begin{proof}
We may assume that $B$ is affine and (using inversion of adjunction) that $(\cX_b,\cD_b)$ is log Fano for all $t\in B$ possibly after shrinking $B$ in (1) (since the family is equivariant in (2), no shrinking is necessary). By Kawamata-Viehweg vanishing we have $H^i(\cX_b,\cO_{\cX_b})=0$ for all $b\in B$ and all $i>0$, hence as $B$ is affine, $H^i(\cX,\cO_{\cX})=H^i(B,\phi_*\cO_{\cX})=0$ for all $i>0$ as well. By the long exact sequence associated to the exponential sequence $0\to \bZ\to \cO_{\cX}\to \cO_{\cX}^*\to 1$, we see that $\Pic(\cX)\cong H^2(\cX,\bZ)$ and $\Pic(\cX_0)\cong H^2(\cX_0,\bZ)$. By Lemma \ref{lem:local topology}, after further shrinking $B$, the natural map $H^2(\cX,\bZ)\to H^2(\cX_0,\bZ)$ (and hence $\Pic(\cX)\to \Pic(\cX_0)$ as well) is an isomorphism. In case (2), no shrinking is necessary since the diagonal $\bG_m$-action (corresponding to the inclusion $\bG_m\to \bG_m^r$, $t\mapsto (t,t,\cdots,t)$) already induces a deformation retract of $\cX$ onto $X_0$, hence also isomorphisms in integral cohomology and Picard groups.

In particular, let $M_i$ ($i=1,2$) be an ample line bundle on $X_i$ and let $\pi_i:X\rightarrow X_i$ be the natural projection, then $L_i=\pi_i^*M_i$ extends to a line bundle $\cL_i$ on $\cX$. Since the extension is unique, $\cL_i$ is $\bG_m^r$-invariant in case (2). As $(\cX_b,\cD_b)$ is log Fano, by Lemma \ref{lem:nef in nbd} and Shokurov's base-point-free theorem, $\cL_i$ is $\phi$-nef and $\phi$-semiample after possibly shrinking $B$ in (1). Let $\psi_i=\psi_{|m\cL_i|}:\cX \rightarrow \cX_i$ ($i=1,2$) be the fibration (over $B$) induced by the linear system $|m\cL_i|$ for sufficiently large and divisible $m$ and let $\psi=\psi_1\times_B \psi_2: \cX\rightarrow \cX_1\times_B \cX_2$. Note that in case (2), these maps are $\bG_m^r$-equivariant. By Kawamata-Viehweg vanishing we have $H^1(\cX,m\cL_i\otimes \cI_{\cX_0})=H^1(B,\fm_0\cdot \pi_*(m\cL_i))=0$ as before, thus $H^0(\cX,m\cL_i)$ surjects onto $H^0(\cX_0,m\cL_i|_{\cX_0})=H^0(X,mL_i)$. It follows that $\psi_i|_{\cX_0}$ is given by the projection $X\rightarrow X_i$ and $\psi|_{\cX_0}$ is the isomorphism $X\stackrel{\sim}{\rightarrow} X_1\times X_2$, thus $\psi|_{\cX_b}$ is also an isomorphism for all $b\in B$ (possibly after shrinking $B$ in case (1)). 

It remains to prove the uniqueness of the factors. Suppose that we have a decomposition $\cX\times_B U\cong \cX_1\times_U \cX_2$ with central fibers $\cX_{i,0}\cong X_i$ $(i=1,2)$. As $H^j(\cX,\cO_{\cX})=0$ for all $j>0$, by K\"unneth formula we have $H^j(\cX_i,\cO_{\cX_i})=0$ $(i=1,2)$ as well. Thus as in the above proof we have isomorphisms $\Pic(X_i)\cong \Pic(\cX_i)$. Therefore the ample line bundle $M_i$ chosen above uniquely lifts to $\cX_i$ and its pullback to $\cX$ coincides with $\cL_i$ (again by the uniqueness of the extension of $L_i$ to $\cX$). It follows that $\cX_i$ is uniquely determined as the image of $\cX$ under the map $\psi_i=\psi_{|m\cL_i|}$.
\end{proof}
\section{Product formula for delta invariant} \label{sec:product formula}

\begin{thm} \label{thm:delta product}
Let $(X_i,\Delta_i)$ be projective klt pairs and let $L_i$ be big line bundles on $X_i$ $(i=1,2)$. Let $X=X_1\times X_2$, $L=L_1\boxtimes L_2$ and $\Delta=\Delta_1\boxtimes \Delta_2$. Then
    \begin{enumerate}
        \item $\delta(X,\Delta;L)=\min\{ \delta(X_1,\Delta_1;L_1),\delta(X_2,\Delta_2;L_2)\}$.
        \item If there exists a divisor $E$ over $X$ which computes $\delta(X,\Delta;L)$, then for some $i\in\{1,2\}$, there also exists a divisor $E_i$ over $X_i$ that computes $\delta(X_i,\Delta_i;L_i)$.
    \end{enumerate}
\end{thm}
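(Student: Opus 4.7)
The direction $\delta(X,\Delta;L) \le \min\{\delta(X_1,\Delta_1;L_1), \delta(X_2,\Delta_2;L_2)\}$ is the straightforward half. For any prime divisor $E_1$ over $X_1$, realized on a birational model $\pi_1\colon Y_1 \to X_1$, the divisor $F := E_1 \times X_2$ on $Y_1 \times X_2$ is a prime divisor over $X$. Adjunction on the product pair gives $A_{X,\Delta}(F) = A_{X_1,\Delta_1}(E_1)$. Moreover, on $Y_1 \times X_2$ the volume identity
\[
\vol\bigl((\pi_1^*L_1 - xE_1) \boxtimes L_2\bigr) = \binom{n}{n_1}\vol(\pi_1^*L_1 - xE_1)\cdot \vol(L_2),
\]
where $n_i = \dim X_i$ and $n = n_1+n_2$, combined with $L^n = \binom{n}{n_1}L_1^{n_1}\cdot L_2^{n_2}$ and Fubini applied to the integral formula for $S$, yields $S(L;F) = S(L_1;E_1)$. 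Hence $\delta(X,\Delta;L) \le \delta(X_1,\Delta_1;L_1)$, and the other factor is symmetric. If equality holds on one side and the matching $\delta(X_i,\Delta_i;L_i)$ is computed by some $E_i$, then $E_i\times X_{3-i}$ computes $\delta(X,\Delta;L)$, settling one half of (2).

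For the hard direction I would prove $\delta_m(X,\Delta;L) \ge \min\{\delta_m(X_1,\Delta_1;L_1),\delta_m(X_2,\Delta_2;L_2)\} - o_m(1)$, which by Theorem \ref{thm:delta as inf} yields (1) as $m\to\infty$. Fix $m$ large enough that $H^0(X,mL) \cong V_1 \otimes V_2$ with $V_i = H^0(X_i,mL_i)$; set $N_i = \dim V_i$ and $N = N_1N_2$. Let $D$ be any $m$-basis type $\bQ$-divisor of $L$ on $X$, associated to a basis $\{s_1,\dots,s_N\}$ of $V$. Choose general points $x_2^{(1)},\dots,x_2^{(N_2)}\in X_2$, let $\cG_2$ be the associated basis type filtration of type (I) on $V_2$ (Section \ref{sec:prelim-basis-type}), and form the pullback filtration $\cG^j V := V_1\otimes \cG_2^j V_2$, whose graded pieces are $N_1$-dimensional. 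By Lemma \ref{lem:filtration property} applied to $\cG$ and the basis type filtration $\cF$ attached to $\{s_k\}$, the basis can be reindexed level-by-level so that at level $j$ the $N_1$ members take the form
\[
s^{(j)}_\alpha = e^{(j)}_\alpha\otimes t^{(j)} + r^{(j)}_\alpha, \qquad \alpha=1,\ldots,N_1,
\]
with $\{e^{(j)}_\alpha\}_\alpha$ a basis of $V_1$, $t^{(j)}$ a generator of $\Gr^j_{\cG_2}V_2$, and $r^{(j)}_\alpha \in \cG^{j+1}V$.

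Restricting $D$ to the slice $H_{j_0} := X_1\times \{x_2^{(j_0+1)}\}$: the contributions with $j > j_0$ vanish identically on $H_{j_0}$ and only feed the multiplicity of $H_{j_0}$ as a component of $D$; those with $j = j_0$ restrict to nonzero scalar multiples of the basis $\{e^{(j_0)}_\alpha\}$ of $V_1$; and those with $j < j_0$ restrict to perturbations that, by genericity of the base points, remain a basis of $V_1$. Thus the residue $D|_{H_{j_0}}^\circ$ equals a scalar multiple of a convex combination of $m$-basis type $\bQ$-divisors of $L_1$ on $X_1$, and the convexity of the log canonical region forces $\lct(X_1,\Delta_1;D|_{H_{j_0}}^\circ) \ge \delta_m(X_1,\Delta_1;L_1)$ after rescaling. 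Running the analogous argument with the roles of the factors swapped, and applying inversion of adjunction along generic slices in both directions, gives $\lct(X,\Delta;D) \ge \min\{\delta_m(X_1,\Delta_1;L_1),\delta_m(X_2,\Delta_2;L_2)\}$ asymptotically. For the remaining half of (2), I would analyze the equality case via Lemma \ref{lem:restrict val}: the restrictions $v_i := \ord_E|_{K(X_i)}$ are each either trivial or divisorial, and the inequalities above degenerate precisely when some $v_i$ attains $\delta(X_i,\Delta_i;L_i)$, with the trivial-restriction case reducing by base-change invariance of $\delta$ to the generic fiber of the appropriate projection.

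The hard part will be the bookkeeping in the reorganized-basis step: verifying that the correction terms $r^{(j)}_\alpha$ vanish on $H_{j_0}$ for $j > j_0$ to precisely the order required to identify the multiplicity of $H_{j_0}$ in $D$, and that they leave the basis property intact at levels $j < j_0$ for sufficiently generic base points. The globalization step is equally delicate: promoting slice-wise lc bounds (valid on a Zariski-dense set of slices in each factor) to an lc bound on all of $X$ requires observing that any divisorial non-lc center of $(X,\Delta+cD)$ must project dominantly onto at least one factor, so it is necessarily detected by a general slice in the opposite direction — this is precisely why a symmetric treatment of both factors is essential.
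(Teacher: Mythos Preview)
Your overall strategy---restrict basis-type divisors to slices via a type (I) filtration on one factor and invoke inversion of adjunction---is essentially the paper's Case~1, but there is a genuine gap in the globalization step. The claim that ``any divisorial non-lc center of $(X,\Delta+cD)$ must project dominantly onto at least one factor'' is false: a divisor $E$ over $X$ centered at a closed point $p=(p_1,p_2)$ dominates neither factor, so general slices $X_1\times\{x\}$ and $\{y\}\times X_2$ both miss it, and your symmetric type-(I) argument gives no control over such $E$. The paper handles this by first fixing $E$ (rather than $D$) and noting that a \emph{single} $m$-basis type divisor compatible with $\cF_E$ suffices to bound $S_m(E)$. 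When the center of $E$ does not dominate $X_2$, the restriction of $\ord_E$ to $K(X_2)$ is a nontrivial divisorial valuation $\ord_G$ by Lemma~\ref{lem:restrict val}, and one must use a type (II) basis-type filtration on $H^0(X_2,mL_2)$ associated to $G$: the slice is then taken at a general point of $G$ on the blowup, and one tracks the coefficient of $\pi_2^*G$ in the pullback of the constructed $\Gamma_m$ (the quantity $q_m(c)$ in the paper), which converges to $c\cdot S(L_2;G)-A_{X_2}(G)+1<1$ precisely because $c<\delta(L_2)$. This case is not accessible with type-(I) filtrations alone.

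There is also a smaller issue in your reorganization step: an arbitrary basis $\{s_k\}$ cannot in general be \emph{reindexed} to be compatible with a prescribed filtration $\cG$---one must transform it, which changes the associated $m$-basis type divisor $D$. The paper sidesteps this because it never fixes $D$ a priori: it chooses $f_j$ to be a general element of $\cF^j R_m\cap\cG^i R_m$ (for $j\in A_i$) and verifies afterward that the resulting $\Gamma_m$ is compatible with $\cF$. This freedom is essential and is lost once you insist on bounding $\lct(X,\Delta;D)$ for an arbitrary $D$.
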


\begin{proof}[Proof of Theorem \ref{thm:delta product}]
For simplicity we assume that $\Delta_1=\Delta_2=0$; the proof of the general case is almost identical. It is easy to see that 
\begin{equation} \label{eq:delta<=min}
    \delta(L) \le \min\{ \delta(L_1),\delta(L_2)\},
\end{equation}
so for (1) we only need to prove the reverse inequality. Let
\begin{equation} \label{eq:c<delta}
    0<c<\min\{ \delta(L_1),\delta(L_2)\}
\end{equation}
and let $E$ be a divisor over $X$ (living on some smooth birational model $\pi:\tX \to X$). By Theorem \ref{thm:delta as inf} we need to show that
\[
S_m(E)\le c^{-1} A_X(E)
\]
when $m\gg 0$. Let $R_m=H^0(X,mL)$, let $\cF$ be any basis type filtration of $R_m$ given by refining the filtration $\cF_E$ of $R_m$ and let $D$ be any $m$-basis type $\bQ$-divisor of $L$ that's compatible with $\cF$, then we have $S_m(E)=\ord_E(D)$ (since $D$ is also compatible with $\cF_E$) and therefore it suffices to show that
\begin{equation} \label{claim:klt along E}
    (X,cD) \text{ is klt along } E \text{ when } m\gg 0.
\end{equation}
Note that this claim does not depend on the choice of $\cF$ and $D$. Let $R_{m,i}=H^0(X_i,mL_i)$, let $N_{m,i}=\dim R_{m,i}$ ($i=1,2$) and let $N_m=\dim R_m$. For ease of notation we also let $N=N_{m,2}$. By K\"unneth formula, we have $R_m=R_{m,1}\otimes R_{m,2}$, thus $N_m = N_{m,1}N_{m,2}$.

Assume first that the center of $E$ on $X$ dominates $X_2$. Let $\cG$ be a basis type filtration of $R_{m,2}$ of type (I) associated to some prescribed base points $x_1,\cdots,x_N$. After tensoring with $R_{m,1}$, it induces an $\bN$-filtration (which we also denote by $\cG$) on $R_m$. By construction, we have canonical isomorphisms
\begin{equation} \label{eq:restriction}
    \Gr_\cG^i R_m \cong R_{m,1}\otimes \Gr_\cG^i R_{m,2} \cong H^0(X_1,mL_1)\otimes k(x_{i+1})
\end{equation}
for $0\le i\le N-1$. Now $\cF$ induces a filtration on the graded pieces $\Gr_\cG^i R_m$ and since $\cF$ is of basis type, we have $\dim \Gr_\cF^j \Gr_\cG^i R_m\le 1$ for all $i,j$. For a fixed $0\le i\le N-1$, let $A_i=\{j\,|\,\dim \Gr_\cF^j \Gr_\cG^i R_m = 1\}$. By Lemma \ref{lem:filtration property}, we have $|A_i|=N_{m,1}$ for all $i$ and $\cup_{i=0}^{N-1} A_i$ is a partition of $\{0,1,\cdots,N_m-1\}$. Let $x\in X_2$ be a general smooth point and let $F=X_1\times x\subseteq X$. For $0\le i\le N-1$ and $j\in A_i$, let $f_j$ be a general member of $\cF^j R_m \cap \cG^i R_m$. We claim that for a fixed $i$, 
\begin{equation} \label{claim:form basis}
    f_j|_F\;(j\in A_i) \text{ form a basis of } H^0(F,mL_1).
\end{equation}
Indeed, by our construction, $f_j|_{X_1\times x_{i+1}}$ form a basis of $H^0(X_1,mL_1)$ via the surjection $\cF^j R_m \cap \cG^i R_m \twoheadrightarrow \Gr_\cF^j \Gr_\cG^i R_m$ and the isomorphism \eqref{eq:restriction}, hence the same holds over a general point $x\in X_2$, proving \eqref{claim:form basis}.

It follows from \eqref{eq:c<delta} and \eqref{claim:form basis} that the pair (where $D_j=(f_j=0)$)
\[(F,\frac{c}{mN_{m,1}}\sum_{j\in A_i} D_j|_F)\]
is klt when $m\gg 0$. By inversion of adjunction, this implies that \[(X,\frac{c}{mN_{m,1}}\sum_{j\in A_i} D_j)\]
is klt in a neighbourhood of $F$. As being klt is preserved under convex combination, we see that $(X,c\Gamma_m)$ is also klt near $F$ where $\Gamma_m:=\frac{1}{mN_m}\sum_{j=0}^{N_m-1} D_j = \frac{1}{mN_m} \sum_{i=0}^{N-1} \sum_{j\in A_i} D_j$. In particular, it is klt along the divisor $E$; from the construction it is not hard to see that $\Gamma_m$ is an $m$-basis type $\bQ$-divisor of $L$ that is compatible with $\cF$, this proves \eqref{claim:klt along E} when $E$ dominates $X_2$.

Suppose that $E$ computes $\delta(L)$, i.e. 
\begin{equation} \label{eq:compute delta}
    \delta(L)=\frac{A_X(E)}{S(E)}.
\end{equation}
Let $\delta=\delta(L)$ and let $\tF$ be the strict transform of $F$ on $\tX$. Then $\tF$ is a log resolution of $F$ and $E|_{\tF}$ is a smooth divisor on $\tF$. Let $E_1$ be an irreducible component of $E|_{\tF}$. Since $x\in X_2$ is general, for a fixed $m$ we have $A_F(E_1)=A_X(E)$ and $\ord_{E_1}(\Gamma_m|_F)=\ord_E(\Gamma_m)$. Since $\Gamma_m$ is compatible with $\cF_E$, letting $m\rightarrow \infty$ we have $\ord_E(\Gamma_m)=S_m(E)\to S(E) = \delta^{-1} A_X(E)$ by \eqref{eq:compute delta}. Hence if $x\in X_2$ is very general, we have $\ord_{E_1}(\Gamma_m|_F)\to \delta^{-1} A_F(E_1)$. But by \eqref{claim:form basis}, $\Gamma_m|_F$ is a convex combination of $m$-basis type divisors, thus we have
\[S(E_1) \ge \lim_{m\to \infty} \ord_{E_1}(\Gamma_m|_F) = \delta^{-1} A_F(E_1)\]
and therefore by identifying $F$ with $X_1$, we get a chain of inequalities
\[\delta \ge \frac{A_F(E_1)}{S(E_1)} \ge \delta(L_1) \ge \delta, \]
where the last inequality comes from \eqref{eq:delta<=min}. It follows that equalities hold throughout and hence $E_1$ computes $\delta(X_1,L_1)$.

Next assume that the center of $E$ on $X$ does not dominate $X_2$. By Lemma \ref{lem:restrict val}, $\ord_E$ induces a divisorial valuation $v$ on $X_2$ via the projection $X\rightarrow X_2$. Let $\phi:Y\rightarrow X_2$ be a birational morphism such that $Y$ is smooth and the center of $v$ on $Y$ is a divisor $G$. Let $\cG$ be a basis type filtration on $R_{m,2}=H^0(Y,m\pi^*L_2)$ of type (II) associated to some general points $x_1,\cdots,x_N$ on $G$. As in the previous case, we get an induced filtration $\cG$ on $R_m$ and a canonical isomorphism \eqref{eq:restriction} for $0\le i\le N-1$. We also have the induced filtration $\cF$ on the graded pieces $\Gr_\cG^i R_m$ and we define the sets $A_i$ ($0\le i\le N-1$) and choose $f_j\in \cF^j R_m \cap \cG^i R_m$ ($j\in A_i$) as before. Let $D_j=(f_j=0)\subseteq X$ and let $W=X_1\times Y$, with the induced birational map $W\rightarrow X$ still denoted by $\phi$. We may write $\phi^*D_j=a_j \pi_2^*G+B_j$ for some $a_j\ge 0$ where $\pi_2$ is the second projection $W\rightarrow Y$ and $\pi^*_2 G\not\subseteq \Supp(B_j)$. By the construction of $\cG$, we have $a_j=\ord_G (\cG^i R_{m,2})$ if $j\in A_i$. Now let $x$ be a general point of $G$ and let $F=X_1\times x\subseteq W$. Note that $B_j\sim m\phi^*L-a_j\pi_2^* G$, hence $B_j|_F\sim mL_1$. We claim that for a fixed $0\le i\le N-1$,
\begin{equation} \label{claim:form basis-II}
    B_j|_F\;(j\in A_i) \text{ form a basis of } |mL_1|.
\end{equation}
Indeed, by the construction of $\cG$ and the isomorphism \eqref{eq:restriction}, $B_j|_{X_1\times x_{i+1}}$ form a basis of $|mL_1|$, hence the same is true for a general point $x$ and \eqref{claim:form basis-II} follows.

Let $\Gamma_m=\frac{1}{mN_m}\sum_{j=0}^{N_m-1} D_j$ as before. We may write
\begin{equation} \label{eq:pullback}
    K_W + q_m(c) \pi_2^*G+\tG_m=\phi^*(K_X+c\Gamma_m)
\end{equation}
for some $q_m(c) \in \bQ$ and some divisor $\tG_m$ (it is not necessarily effective but is effective near $F$) not containing $\pi^*_2 G$ in its support. In fact, from the previous discussions we have
\begin{eqnarray*}
q_m(c) & = & \frac{c}{mN_m}\sum_{j=0}^{N_m-1}a_j - A_{X_2}(G) +1\;\; = \;\; \frac{c}{mN_m} \sum_{i=0}^{N-1} \sum_{j\in A_i} a_j - A_{X_2}(G) +1 \\
 & = & \frac{c}{mN_m} \sum_{i=0}^{N-1} N_{m,1}\cdot \ord_G(\cG^i R_{m,2}) - A_{X_2}(G) +1 \\
 & \to & c\cdot S(\ord_G) - A_{X_2}(G) +1 \;\; < \;\; 1 \;\; (m\to \infty)
\end{eqnarray*}
where the convergence comes from the fact that the basis type filtration $\cG$ is a refinement of $\cF_G$ and the last inequality holds because $c<\delta(L_2)$. Taking $m\gg 0$, we may then assume that $q_m(c)<1$. Recall that the center of $E$ dominates $G$ and $F$ is the fiber over a general point of $G$, thus to prove \eqref{claim:klt along E}, it suffices to show that $(X,c\Gamma_m)$ is klt near $F$, which follows if we know that $(W,\pi_2^*G+\tG_m)$ is plt near $F$. But it is not hard to see that $\tG_m|_F=\frac{c}{mN_m}\sum_{j=0}^{N_m-1} B_j|_F$, hence $(F,\tG_m|_F)$ is klt (when $m\gg 0$) by \eqref{eq:c<delta} and \eqref{claim:form basis-II} as in the previous case. \eqref{claim:klt along E} now follows by inversion of adjunction. In particular, we have proven the first statement of the theorem.

Suppose that $E$ computes $\delta(L)$. We claim that $G$ computes $\delta(L_2)$. Suppose that this is not the case, then $S(G)\cdot \delta(L_2) <A_{X_2}(G)$, hence by the above computation, there exists some constant $\epsilon>0$ such that $q_m(c)<1-\epsilon$ for all $c<\delta(L_2)$ and all corresponding $m\gg 0$. Since $(W,\pi_2^*G+\tG_m)$ is plt near $F$ when $m\gg 0$, we have
\begin{eqnarray*}
A_{X,c\Gamma_m}(E) & = & A_{W, q_m(c) \pi_2^*G+\tG_m}(E) \\
 & = & A_{W,\pi_2^*G+\tG_m}(E) + (1-q_m(c))\cdot \ord_E(G) \\
 & > & \epsilon \cdot \ord_E(G).
\end{eqnarray*}
Letting $m\rightarrow \infty$ and then $c\to \delta=\delta(L)$, we obtain
\[A_X(E)-\delta \cdot S(E) \ge \epsilon \cdot \ord_E(G) > 0,\]
a contradiction to \eqref{eq:compute delta}. This finishes the proof.
\end{proof}

\begin{rem} \label{rem:explicit div on factors}
It follows from the above proof that if $E$ computes $\delta(L)$, then either the center of $E$ dominates $X_2$ and its restriction to a very general fiber $X_1\times x$ gives a divisor $E_1$ over $X_1$ that computes $\delta(L_1)$, or the center of $E$ doesn't dominate $X_2$ and induces a divisorial valuation (through the second projection) on $X_2$ that computes $\delta(L_2)$. In the former case, we can actually say a bit more:
\end{rem}

\begin{cor} \label{cor:induce div dominant case}
Notation as in Theorem \ref{thm:delta product}. Let $E$ be a divisor over $X$ that computes $\delta(L)$ whose center dominates $X_2$, then for a general $x\in X_2$, the restriction of $E$ to $X_1\times x$ induces a prime divisor that computes $\delta(L_1)$.
\end{cor}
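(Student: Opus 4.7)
The plan is to refine Remark \ref{rem:explicit div on factors}, which already shows that every irreducible component of $E|_{\tF}$ computes $\delta(L_1)$ for very general $x$, by proving that $E|_{\tF}$ has exactly one such component and appears with multiplicity one. Let $\pi: \tX \to X$ be a smooth birational model carrying $E$ as a prime divisor, and for a general point $x \in X_2$ let $\tF \subset \tX$ be the strict transform of $F = X_1 \times \{x\}$. For such $x$, standard genericity considerations imply that $\tF$ is smooth and birational to $F \cong X_1$ via $\pi$, so that $k(\tF) = k(X_1)$. Moreover, since $E$ dominates $X_2$, the composition $E \hookrightarrow \tX \to X_2$ is dominant, and generic smoothness forces $E \cap \tF$ to be smooth of pure dimension $\dim X_1 - 1$ for general $x$; in particular, $E$ and $\tF$ meet transversally at a generic point of each component of the intersection, so $E|_{\tF}$ is reduced.

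Writing $E|_{\tF} = E_{1,1} + \cdots + E_{1,k}$ as a sum of distinct prime divisors, the main step is to show $k = 1$. I will verify that each $E_{1,i}$ induces the same divisorial valuation on $k(\tF) = k(X_1)$, namely $v_1 := \ord_E|_{k(X_1)}$, where $k(X_1)$ is viewed as a subfield of $k(X)$ via the first projection $\pi_1$. This is a local calculation: at a generic point of $E_{1,i}$, transversality furnishes local parameters $u, v$ on $\tX$ with $E = (u = 0)$ and $\tF = (v = 0)$, and for any $f \in k(X_1)^*$ the pullback $g := \pi^* \pi_1^* f$ factors locally as $u^a h$ with $a = \ord_E(g)$ and $h$ a local unit; restricting by $v = 0$ preserves both the exponent and the unit property, and $g|_{\tF}$ is identified with $f$ under the birational map $\tF \dashrightarrow X_1$. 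This gives $\ord_{E_{1,i}}(f) = a = v_1(f)$ independently of $i$, so all the $E_{1,i}$ coincide as valuations on $k(\tF)$. Since distinct prime divisors on the smooth variety $\tF$ produce distinct divisorial valuations on its function field, we conclude $k = 1$. Thus $E|_{\tF}$ is prime, and by Remark \ref{rem:explicit div on factors} this prime divisor computes $\delta(L_1)$.

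The argument is essentially routine; the point calling for real care is the transversality–based local calculation identifying $\ord_{E_{1,i}}|_{k(X_1)}$ with $v_1$, together with the genericity check ensuring that $\tF$ is smooth, that $\pi|_{\tF}$ is birational onto $F$, and that $E$ intersects $\tF$ transversally. All of these conditions hold simultaneously for $x$ in a suitable dense open subset of $X_2$, so no serious obstacle is anticipated.
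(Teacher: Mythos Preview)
Your proposal has a genuine error in the irreducibility argument, and it also misses the actual content of the corollary.

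First, the local calculation is wrong. You write $g=u^a h$ with $a=\ord_E(g)$ and claim $h$ is a local unit at the generic point of $E_{1,i}$. But $a=\ord_E(g)$ is computed at the \emph{generic point of $E$}; writing $g=u^a h$ there only guarantees that $h$ does not vanish identically on $E$, not that $h$ is a unit at the more special point $\eta_{E_{1,i}}$. Concretely, take $X_1=X_2=\bA^1$ with coordinates $s,t$ and $E=\{s^2=t\}$. For general $t_0\neq 0$ the restriction $E|_{\{t=t_0\}}$ consists of the two distinct points $s=\pm\sqrt{t_0}$, and these give \emph{different} divisorial valuations on $k(X_1)$ (e.g.\ they disagree on $s-\sqrt{t_0}$). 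So your conclusion that all components induce the same valuation $v_1$ is false in general. Your argument nowhere uses the hypothesis that $E$ computes $\delta(L)$, so it cannot be repaired by invoking that assumption after the fact. In fact the paper's proof never claims that $E|_{\tF_x}$ is irreducible; the phrase ``induces a prime divisor'' should be read as ``some irreducible component $E_x$ of $E|_{\tF_x}$'', which is all that is needed downstream.

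Second, and more importantly, you have not addressed what the corollary actually adds to Remark~\ref{rem:explicit div on factors}: the upgrade from \emph{very general} $x$ to \emph{general} $x$. You invoke Remark~\ref{rem:explicit div on factors} at the end, but that only gives the conclusion on a countable intersection of opens. The paper closes this gap by a semicontinuity argument: for general $x$ one has $A_{F_x}(E_x)=A_X(E)$, and upper semicontinuity of the volume function in families gives $S_{F_x}(E_x)\ge S_{F_y}(E_y)$ for any very general $y$, whence $A_{F_x}(E_x)/S_{F_x}(E_x)\le \delta(L_1)$ and hence equality. This step is the real point of the corollary and is absent from your proposal.
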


\begin{proof}
As before we assume that $\Delta_1=\Delta_2=0$. Let $\phi:\tX\rightarrow X$ be a log resolution on which $E$ lives as an actual divisor as in the above proof. Let $x\in X_2$ be a general point, then the strict transform $\tF_x$ of $F_x:=X_1\times x$ is smooth, $E|_{\tF_x}$ is a smooth divisor and $A_{F_x}(\ord_{E_x})=A_X(\ord_E)$ for any component $E_x$ of $E|_{\tF_x}$. By the proof of Theorem \ref{thm:delta product}, we have
\[\delta(L_1) = \frac{A_{F_y}(\ord_{E_y})}{S_{F_y}(\ord_{E_y})}\]
for very general points $y\in X_2$. 
But by the upper semi-continuity of volume function, we have 
\begin{eqnarray*}
    S_{F_x}(\ord_{E_x}) & = & \frac{1}{\vol(L_1)}\int_0^\infty \vol_{\tF_x}(\phi^*L_1-tE_x){\rm d}t \\
     & \ge & \frac{1}{\vol(L_1)}\int_0^\infty \vol_{\tF_y}(\phi^*L_1-tE_y){\rm d}t \\
     & = & S_{F_y}(\ord_{E_y}).
\end{eqnarray*}
Hence we also have 
\[\delta(L_1) \ge \frac{A_{F_x}(\ord_{E_x})}{S_{F_x}(\ord_{E_x})}.\]
Since the reverse inequality clearly holds, it's indeed an equality and thus $E_x$ computes $\delta(L_1)$.
\end{proof}

\begin{cor} \label{cor:product thm}
Let $(X_i,\Delta_i)$ $(i=1,2)$ be log Fano pairs and let $(X,\Delta)=(X_1\times X_2, \Delta_1\boxtimes \Delta_2)$. Then $(X,\Delta)$ is K-semistable $($resp. K-stable, uniformly K-stable$)$ if and only if $(X_i,\Delta_i)$ $(i=1,2)$ are both K-semistable $($resp. K-stable, uniformly K-stable$)$.
\end{cor}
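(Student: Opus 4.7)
The plan is to reduce all three equivalences to Theorem \ref{thm:delta product} by writing the anticanonical polarization of the product in product form. Since $K_{X_1\times X_2}=K_{X_1}\boxtimes K_{X_2}$, we have $-K_X-\Delta = L_1\boxtimes L_2$ where $L_i:=-K_{X_i}-\Delta_i$, so Theorem \ref{thm:delta product}(1) gives
\[
\delta(X,\Delta)\;=\;\min\{\delta(X_1,\Delta_1),\,\delta(X_2,\Delta_2)\}.
\]
This is exactly what is needed for the K-semistable case ($\delta\ge 1$) and the uniformly K-stable case ($\delta>1$), yielding both directions immediately.

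For the forward direction of K-stability, given any divisor $E_1$ over $X_1$ living on a log resolution $Y_1\to X_1$, I lift it to the divisor $E=E_1\times Y_2$ on $Y_1\times Y_2\to X$, where $Y_2\to X_2$ is a resolution of $X_2$. The product formulas $K_{Y_1\times Y_2}=K_{Y_1}\boxtimes K_{Y_2}$ for relative canonical divisors and $\vol(D_1\boxtimes D_2)=\binom{n_1+n_2}{n_1}\vol(D_1)\vol(D_2)$ for big divisors on a product give $A_{X,\Delta}(E)=A_{X_1,\Delta_1}(E_1)$ and $S(L;E)=S(L_1;E_1)$, so the ratio $A/S$ is preserved. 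Hence K-stability of $(X,\Delta)$ transfers to each factor, and symmetrically for $X_2$.

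For the reverse direction, assume both $(X_i,\Delta_i)$ are K-stable. By the K-semistable case $(X,\Delta)$ is K-semistable, so $A\ge S$ on all divisors over $X$. If $(X,\Delta)$ were not K-stable there would exist a divisor $E$ over $X$ with $A(E)=S(E)$, i.e.\ $E$ computes $\delta(X,\Delta)=1$. Theorem \ref{thm:delta product}(2) then produces a divisor $E_i$ over some $X_i$ computing $\delta(X_i,\Delta_i)$, and a closer look at its proof shows that in fact $\delta(X_i,\Delta_i)=\delta(X,\Delta)=1$: Case 1 of that proof already supplies the chain of equalities $\delta\ge A(E_1)/S(E_1)\ge\delta(L_1)\ge\delta$, while in Case 2 one substitutes $c\to\delta$ and $m\to\infty$ in the identity $A_{X,c\Gamma_m}(E)=A_{W,\pi_2^*G+\tG_m}(E)+(1-q_m(c))\,\ord_E(\pi_2^*G)$ and uses $\ord_E(\pi_2^*G)>0$ (as $E$ has center in $\pi_2^*G$) together with the plt-positivity of $A_{W,\pi_2^*G+\tG_m}(E)$ to force $\delta(L_2)=\delta(L)$. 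Either way $A(E_i)=S(E_i)$, contradicting K-stability of $(X_i,\Delta_i)$. The main subtlety is this refinement of Theorem \ref{thm:delta product}(2) beyond what is explicit in its statement; once it is noted, the K-stable case is a short deduction.
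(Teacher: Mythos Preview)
Your argument is correct and follows the paper's approach of reducing all three cases to Theorem \ref{thm:delta product} via the $\delta$-invariant characterizations of the various stability notions. The paper's proof of the K-stable case is terser (just ``the result again follows from Theorem \ref{thm:delta product}''), and the refinement you extract---that the index $i$ produced in part (2) satisfies $\delta(L_i)=\delta(L)$---is genuinely needed and correctly verified; an alternative to your Case~2 limit computation is to apply Theorem \ref{thm:delta product} a second time with the roles of $X_1$ and $X_2$ swapped, so that in every case one of the two applications lands on a factor with $\delta(L_i)=\delta(L)$.
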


\begin{proof}
By definition, $(X,\Delta)$ is K-semistable (resp. uniformly K-stable) if and only if $\delta(X,\Delta)\ge 1$ (resp. $>1$), thus the statement in these cases follows from the above product formula of $\delta$-invariant. On the other hand, $X$ is K-stable if and only if $\delta(X,\Delta)>1$ or $\delta(X,\Delta)=1$ and it is not computed by any divisorial valuations, hence the result again follows from Theorem \ref{thm:delta product}.
\end{proof}
\section{K-polystable case} \label{sec:polystable}

In this section, we prove the K-polystable part of Theorem \ref{main:product}.

\begin{prop} \label{prop:polystable product}
Let $(X_i,\Delta_i)$ $(i=1,2)$ be log Fano pairs and let $(X,\Delta)=(X_1\times X_2, \Delta_1\boxtimes \Delta_2)$. Then $(X,\Delta)$ is K-polystable if and only if $(X_i,\Delta_i)$ $(i=1,2)$ are both K-polystable.
\end{prop}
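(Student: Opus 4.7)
The plan is to apply the valuative criterion for K-polystability (Theorem~\ref{thm:T-polystable}) with the maximal torus $\bT = \bT_1 \times \bT_2 \subseteq \Aut(X, \Delta)$, where $\bT_i \subseteq \Aut(X_i, \Delta_i)$ is a maximal torus. Lemma~\ref{lem:deform Fano product} applied to the trivial deformation gives $\Aut^0(X, \Delta) = \Aut^0(X_1, \Delta_1) \times \Aut^0(X_2, \Delta_2)$, so $\bT$ is maximal.

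For the only-if direction, K-semistability of each factor is Corollary~\ref{cor:product thm}. If $(X_1, \Delta_1)$ were not K-polystable, a non-product special test configuration $(\cX_1, \cD_1) \to \bA^1$ with K-semistable central fiber, paired with the trivial test configuration of $(X_2, \Delta_2)$, would produce a K-semistable special test configuration of $(X, \Delta)$ by Corollary~\ref{cor:product thm}. K-polystability of $(X, \Delta)$ would force it to be a product, and then the uniqueness of factors in Lemma~\ref{lem:deform Fano product}~(2) would force $(\cX_1, \cD_1)$ to be a product too, a contradiction.

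For the if direction, K-semistability of $(X, \Delta)$ is Corollary~\ref{cor:product thm}, so by Theorem~\ref{thm:T-polystable} it suffices to show that every $\bT$-invariant divisorial $v = \ord_E$ on $X$ with $A_{X,\Delta}(v) = S(v)$ equals $\wt_\xi$ for some $\xi \in N(\bT)_\bQ$. Working on a $\bT$-equivariant log resolution, $E$ is $\bT$-invariant. Applying Theorem~\ref{thm:delta product}~(2) with Remark~\ref{rem:explicit div on factors} and Corollary~\ref{cor:induce div dominant case} to each of the two projections, one shows that for every $i$ the restriction $v|_{k(X_i)}$ is either trivial or a $\bT_i$-invariant divisorial valuation on $X_i$ computing $\delta(L_i)$; K-polystability of $(X_i, \Delta_i)$ together with Theorem~\ref{thm:T-polystable} then identifies $v|_{k(X_i)} = \wt_{\xi_i}$ for some $\xi_i \in N(\bT_i)_\bQ$ (possibly zero). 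Setting $\xi := (\xi_1, \xi_2) \in N(\bT)_\bQ$ and $v' := v_{-\xi}$, Lemma~\ref{lem:twist valuation compute delta} yields $A(v') = S(v')$ (unless $v = \wt_\xi$, in which case we are already done), and a direct computation shows $v'|_{k(X_i)} = 0$ for both $i$.

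It remains to rule out the existence of such a nontrivial $\bT$-invariant divisorial $v'$ with $A(v') = S(v')$ and trivial restrictions to both $k(X_i)$. If $v'$ were nontrivial, applying Theorem~\ref{thm:delta product}~(2), Remark~\ref{rem:explicit div on factors}, and Corollary~\ref{cor:induce div dominant case} to $v'$ would produce either a $\bT_i$-invariant prime divisor $E_i$ over some $X_i$ with $\ord_{E_i} = v'|_{k(X_i)} = 0$ (impossible since $E_i$ is prime), or a nontrivial divisorial valuation $v'|_{k(X_j)}$ computing $\delta(L_j)$ (impossible since $v'|_{k(X_j)} = 0$). Either way we reach a contradiction, so $v' = 0$ and $v = \wt_\xi$, completing the proof. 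The most delicate step is the initial identification of $v|_{k(X_i)}$ with a weight valuation: this relies on a careful comparison of local equations of $E$ with those of its restriction to a very general fiber (to see $v|_{k(X_i)} = \ord_{E_i}$ in the fiber-restriction case), and on a case analysis across the possible dominance patterns of the center of $v$ on $X_1$ and $X_2$ (noting that dominance on both factors is ruled out by the same contradiction argument as above).
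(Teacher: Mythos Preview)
Your overall strategy matches the paper's: work with the product torus $\bT=\bT_1\times\bT_2$ and apply Theorem~\ref{thm:T-polystable}. Your first reduction --- that each function-field restriction $v|_{k(X_i)}$ is either trivial or equals some $\wt_{\xi_i}$ --- is correct: it is immediate when the center of $E$ dominates $X_i$, and follows from Remark~\ref{rem:explicit div on factors} (applied to the projection $\pi_i$) when it does not. The gap is in the final step.

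When you apply Remark~\ref{rem:explicit div on factors} and Corollary~\ref{cor:induce div dominant case} to the twisted valuation $v'$, you assert that the fiber-restriction divisor $E_i$ satisfies $\ord_{E_i}=v'|_{k(X_i)}$. This is false: fiber restriction and function-field restriction are genuinely different operations. For instance, if $E$ is the diagonal in $\bP^1\times\bP^1$, then $\ord_E$ restricts \emph{trivially} to each $k(\bP^1)$ (the center dominates both factors), yet the restriction of $E$ to a general fiber $\bP^1\times\{x\}$ is a point, giving a \emph{nontrivial} valuation. Your ``most delicate step'' thus fails, and you have not ruled out a nontrivial $v'$ whose center dominates both factors.

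The paper avoids this identification entirely by organizing the argument differently. It splits once, according to whether the center of $E$ dominates $X_2$. If it does, Corollary~\ref{cor:induce div dominant case} gives $\bT_1$-invariant fiber restrictions $E_x$ with $\ord_{E_x}=\wt_{\xi_x}$ by Theorem~\ref{thm:T-polystable}; continuity forces $\xi_x\equiv\xi$, and since every fiber of $E\to X_2$ is then the \emph{same} toric divisor on $X_1$, one concludes directly that $\ord_E=\wt_\xi$ on $X$ --- no comparison with $v|_{k(X_1)}$ is needed. If the center does not dominate $X_2$, one twists only by $\xi_2\in N(\bT_2)_\bQ$; the resulting $v'=(\ord_E)_{-\xi_2}$ has center dominating $X_2$ (otherwise the induced valuation on $X_2$ would not be $\wt_{\xi_2}$), and the first case applied to $v'$ gives $\ord_E=\wt_{b\xi_1+\xi_2}$. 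You can repair your argument along the same lines: once $v'$ has center dominating $X_2$, the first-case reasoning yields $v'=\wt_{\xi'}$ for some nonzero $\xi'\in N(\bT_1)_\bQ$, contradicting $v'|_{k(X_1)}=0$.
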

 
\begin{proof}
The ``only if'' part is obvious so we only prove the ``if'' part. Assume that $(X_i,\Delta_i)$ are both K-polystable. Let $\bT_i$ ($i=1,2$) be a maximal torus of $\Aut(X_i,\Delta_i)$, then $\bT=\bT_1\times\bT_2$ is a maximal torus of $X$. By Theorem \ref{thm:T-polystable}, we need to show that if $E$ is a $\bT$-invariant divisor over $X$ with $A_{X,\Delta}(E)=S(E)$, then $\ord_E=\wt_\xi$ for some $\xi\in N(\bT)$. As in the proof of Theorem \ref{thm:delta product}, we separate into two cases.
 
First suppose that the center of $E$ dominates $X_2$. By Corollary \ref{cor:induce div dominant case}, over a general $x\in X_2$, $E$ induces a $\bT_1$-invariant divisor $E_x$ over $X_1\times x$ that computes $\delta(X_1,\Delta_1)$; i.e., $A_{X_1,\Delta_1}(E_x)=S(E_x)$. By Theorem \ref{thm:T-polystable}, this implies $\ord_{E_x}=\wt_{\xi_x}$ for some $\xi_x\in N(\bT_1)$. But as the $E_x$ varies in a continuous family, $\xi_x$ is constant and hence we have $\ord_E=\wt_\xi$ for some $\xi\in N(\bT_1)\subseteq N(\bT)$.
 
Next suppose that the center of $E$ does not dominate $X_2$. Then by Remark \ref{rem:explicit div on factors}, $E$ induces a divisor $G$ over $X_2$ such that $A_{X_2,\Delta_2}(G)=S(G)$. By Theorem \ref{thm:T-polystable}, this implies that $\ord_G=\wt_{\xi_2}$ for some $\xi_2\in N(\bT_2)$. By Lemma \ref{lem:twist valuation compute delta}, either $\ord_E=\wt_{\xi_2}$ and there is nothing to prove or $v=(\ord_E)_{-\xi_2}$ computes $\delta(X,\Delta)=1$ (note that $(X,\Delta)$ is K-semistable by Corollary \ref{cor:product thm}). By Lemma \ref{lem:T-valuation}, $v$ is divisorial and we have $v=b\cdot \ord_{E'}$ for some divisor $E'$ over $X$. Notice that the center of $E'$ dominates $X_2$, otherwise if $G'$ is the divisor on $X_2$ induced by $E'$ then the divisorial valuation induced by $\ord_E$ on $X_2$ should be $(b\cdot\ord_{G'})_{\xi_2}$ rather than $\wt_{\xi_2}$. But then from the discussion of the previous case, we have $\ord_{E'}=\wt_{\xi_1}$ for some $\xi_1\in N(\bT_1)\subseteq N(\bT)$. It follows that $\ord_E=(b\cdot\wt_{\xi_1})_{\xi_2}=\wt_{b\xi_1+\xi_2}$.
\end{proof}

\begin{thm} \label{thm:tc on product}
Let $(X_i,\Delta_i)$ $(i=1,2)$ be K-semistable log Fano pairs and let $(X,\Delta)=(X_1\times X_2, \Delta_1\boxtimes \Delta_2)$. Let $\phi:(\cX,\cD)\rightarrow \bA^1$ be a special test configuration of $(X,\Delta)$ with K-semistable central fiber $(\cX_0,\cD_0)$, then there exists special test configurations $\phi_i:(\cX_i,\cD_i)\rightarrow \bA^1$ $(i=1,2)$ of $(X_i,\Delta_i)$ with K-semistable central fiber such that $(\cX,\cD)\cong (\cX_1\times_{\bA^1} \cX_2, \cD_1\boxtimes \cD_2)$ $($as test configurations, where $\bG_m$ acts diagonally on $\cX_1\times_{\bA^1} \cX_2)$. 
\end{thm}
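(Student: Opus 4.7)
The plan is to embed the given test configuration $(\cX,\cD) \to \bA^1$ into a two-parameter family $(\fX, \fD) \to \bA^2$ whose fiber over the origin is the product of the K-polystable degenerations of the two factors, and then invoke Lemma \ref{lem:deform Fano product}(2) to split the entire family over $\bA^2$ as a fiber product. Restricting the resulting decomposition to $\bA^1 \times \{1\}$ will then yield the desired product decomposition of the original test configuration.

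First I would fix the K-polystable degenerations. By \cite{LWX18}, each pair $(X_i, \Delta_i)$ admits a unique K-polystable degeneration $(Y_i, \Gamma_i)$, and by Proposition \ref{prop:polystable product} together with this uniqueness, the product $(Y, \Gamma) := (Y_1 \times Y_2, \Gamma_1 \boxtimes \Gamma_2)$ is the K-polystable degeneration of $(X, \Delta)$. Realize it as a product test configuration: choose special test configurations $(\cX_i^{0}, \cD_i^0) \to \bA^1$ of $(X_i, \Delta_i)$ with central fibers $(Y_i, \Gamma_i)$, and set $(\cX^{0}, \cD^0) := (\cX_1^0 \times_{\bA^1} \cX_2^0,\; \cD_1^0 \boxtimes \cD_2^0)$. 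Applying Lemma \ref{lem:tc over A^2} to the pair of special test configurations $(\cX, \cD)$ and $(\cX^0, \cD^0)$ of $(X,\Delta)$ produces a $\bG_m^2$-equivariant projective morphism $\psi: (\fX, \fD) \to \bA^2$ with K-semistable log Fano fibers, whose restrictions to $\bA^1 \times \{1\}$ and $\{1\} \times \bA^1$ recover $(\cX, \cD)$ and $(\cX^0, \cD^0)$ respectively.

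The crucial step is to identify the fiber $\fX_{(0,0)}$. The restriction $\fX|_{\bA^1 \times \{0\}}$ is a $\bG_m$-equivariant family (with action from the first $\bG_m$-factor) whose general fiber is the fiber over $(1,0)$, namely $(Y, \Gamma)$, and whose central fiber is $\fX_{(0,0)}$. Since all fibers are K-semistable log Fano by Lemma \ref{lem:tc over A^2} and $(Y, \Gamma)$ is K-polystable, this must arise from a product test configuration of $(Y,\Gamma)$, forcing $\fX_{(0,0)} \cong (Y, \Gamma) = (Y_1 \times Y_2, \Gamma_1 \boxtimes \Gamma_2)$. With this identification of the central fiber as a product of log Fano pairs and $\bG_m^2$-equivariance in hand, Lemma \ref{lem:deform Fano product}(2) applies to $\psi$ and produces a $\bG_m^2$-equivariant decomposition $(\fX, \fD) \cong (\fX_1 \times_{\bA^2} \fX_2,\; \fD_1 \boxtimes \fD_2)$ with projective flat factors $\fX_i \to \bA^2$ whose fiber over $(1,1)$ is $X_i$.

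To conclude I would restrict to $\bA^1 \times \{1\}$, which is preserved by the first $\bG_m$-factor, obtaining projective morphisms $(\cX_i, \cD_i) := (\fX_i, \fD_i)|_{\bA^1 \times \{1\}} \to \bA^1$ with $\bG_m$-action and general fiber $(X_i, \Delta_i)$, fitting into the required diagonal isomorphism $(\cX, \cD) \cong (\cX_1 \times_{\bA^1} \cX_2,\; \cD_1 \boxtimes \cD_2)$. To verify that each $(\cX_i, \cD_i)$ is a special test configuration with K-semistable central fiber, one uses that $(\cX_0, \cD_0)$ decomposes as $\cX_{1,0} \times \cX_{2,0}$ with divisor $\cD_{1,0} \boxtimes \cD_{2,0}$ and is K-semistable by hypothesis, then invokes Corollary \ref{cor:product thm} in reverse to deduce K-semistability of each $(\cX_{i,0},\cD_{i,0})$; relative ampleness and the log Fano property of each $\cX_i \to \bA^1$ descend because $-(K_\cX + \cD)$ is a sum of pullbacks from the two factors, while pltness along $\cX_{i,0}$ follows from pltness of $(\cX, \cX_0 + \cD)$ by inversion of adjunction applied to the complementary factor. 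The main obstacle will be the identification $\fX_{(0,0)} \cong (Y, \Gamma)$, which genuinely requires both the K-polystability of the product (Proposition \ref{prop:polystable product}) and the fact that K-semistable special degenerations of a K-polystable pair are trivial; this is precisely what makes Lemma \ref{lem:deform Fano product}(2) applicable and drives the whole argument.
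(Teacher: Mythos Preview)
Your outline matches the paper's proof closely: both embed $(\cX,\cD)$ into a $\bG_m^2$-equivariant family $(\fX,\fD)\to\bA^2$ via Lemma~\ref{lem:tc over A^2}, identify the fiber over the origin with the K-polystable product $(Y,\Gamma)=(Y_1\times Y_2,\Gamma_1\boxtimes\Gamma_2)$, invoke Lemma~\ref{lem:deform Fano product}(2), and restrict to $\bA^1\times\{1\}$. The one place where you over-claim is that Lemma~\ref{lem:deform Fano product} only splits the underlying variety $\fX\cong\fX_1\times_{\bA^2}\fX_2$, not the pair; the paper constructs $\fD_i$ by hand as the image under the projection $p_i:\fX\to\fX_i$ of the closure of $\pi_i^*\Delta_i\times\bG_m^2$, and checks (using that $\fD_{i,0}\cong\Gamma_i$ and upper semi-continuity of fiber dimension) that $\fD_i$ is actually a divisor whose pullback recovers the relevant summand of $\fD$. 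The identification of the fiber of $\fX_i$ over $(1,1)$ with $X_i$ also needs an argument: the paper obtains it from the uniqueness clause of Lemma~\ref{lem:deform Fano product} applied to the restriction over $\{1\}\times\bA^1$, where the product structure is already given by your $(\cX^0,\cD^0)$.
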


\begin{proof}
By \cite[Theorem 1.3]{LWX18}, $(X_i,\Delta_i)$ has a (unique) K-polystable special degeneration $(Y_i,\Gamma_i)$. By Proposition \ref{prop:polystable product}, $(Y,\Gamma)=(Y_1\times Y_2, \Gamma_1\boxtimes \Gamma_2)$ is K-polystable, hence by Corollary \ref{cor:product thm} and \cite[Theorem 1.3]{LWX18}, it is the unique K-polystable special degeneration of the K-semistable log Fano pair $(X,\Delta)$. By Lemma \ref{lem:tc over A^2}, this degeneration can be put into a $\bG_m^2$-equivariant family $\psi:(\fX,\fD)\to \bA^2$ with K-semistable log Fano fibers such that $(\fX,\fD)\times_{\bA^2} \bG_m^2 \cong (X,\Delta)\times \bG_m^2$, $(\fX,\fD)\times_{\bA^2} (\bA^1\times \{1\})\cong (\cX,\cD)$ (over $\bA^1$) and $(\fX,\fD)\times_{\bA^2} \{(1,0)\} \cong (Y,\Gamma)$. Since $(Y,\Gamma)$ is K-polystable and specially degenerates to the K-semistable pair $(\fX_0,\fD_0)$ over $0\in\bA^2$, we get $(\fX_0,\fD_0)\cong (Y,\Gamma)$ and $(\fX,\fD)\times_{\bA^2} (\bA^1\times \{0\})$ is a product test configuration. As $(Y,\Gamma)$ is log Fano and $Y=Y_1\times Y_2$ is a product, by Lemma \ref{lem:deform Fano product}, there exists $\bG_m^2$-equivariant morphisms $\fX_i\to \bA^2$ ($i=1,2$) with central fibers $Y_i$ such that $\fX \cong \fX_1\times_{\bA^2} \fX_2$ equivariantly over $\bA^2$. Restricting to $\bA^1\times \{0\}$ we see that $\fX_i\times_{\bA^2} \{(1,0)\}\cong Y_i$ by the uniqueness part of Lemma \ref{lem:deform Fano product}. Similarly, as $(\fX,\fD)\times_{\bA^2} (\{1\}\times\bA^1)$ is the fiber product of two special degenerations $X_i\rightsquigarrow Y_i$ by construction, we have $\fX_i\times_{\bA^2} \{(1,1)\}\cong X_i$ by Lemma \ref{lem:deform Fano product}. Denote by $p_i:\fX\to \fX_i$ ($i=1,2$) the projections onto the factors under this isomorphism, let $\pi_i:X\to X_i$ be the natural projections and let $\fD'_i\subseteq \fX$ be the closure of $\pi_i^*\Delta_i\times \bG_m^2$. Let $\fD_i=p_i(\fD'_i)$. By construction, we have $\fD_{i,0}\cong \Gamma_i$, thus by upper semi-continuity of fiber dimension we obtain $\dim \fD_{i,t}\le \dim \fD_{i,0} = \dim \fX_i -1$ for all $t\in \bA^2$, thus $\fD_i$ is a divisor in $\fX_i$ and $\fD'_i$ is the pullback of $\fD_i$. In particular, we have $(\fX,\fD)\cong (\fX_1\times_{\bA^2} \fX_2, \fD_1\boxtimes \fD_2)$ over $\bA^2$. Restricting to $\bA^1\times \{1\}$, we obtain the statement in the theorem.
\end{proof}

\bibliography{ref}

\end{document}